\title{On the equivalence of linear sets}
\author{Bence Csajb\'ok\thanks{This research was supported by the Italian
Ministry of Education, University and Research (PRIN 2012 project ``Strutture geometriche, combinatoria e loro
applicazioni'').
} \and Corrado Zanella$^*$}
\newcommand{\cB}{{\mathcal B}}
\newcommand{\cI}{{\mathcal I}}
\newcommand{\cF}{{\mathcal F}}
\newcommand{\cP}{{\mathcal P}}
\newcommand{\cH}{{\mathcal H}}
\newcommand{\wa}{{\widehat{\alpha}}}
\newcommand{\wg}{{\widehat{\gamma}}}
\newcommand{\cQ}{{\mathcal Q}}
\newcommand{\cX}{{\mathcal X}}
\newcommand{\F}{{\mathbb F}}
\newcommand{\la}{\langle}
\newcommand{\ra}{\rangle}
\newtheorem{theorem}{Theorem}[section]
\newtheorem{lemma}[theorem]{Lemma}
\newtheorem{corollary}[theorem]{Corollary}
\newtheorem{definition}[theorem]{Definition}
\newtheorem{proposition}[theorem]{Proposition}
\newtheorem{remark}[theorem]{Remark}
\DeclareMathOperator{\tr}{Tr}
\DeclareMathOperator{\PG}{{PG}}
\DeclareMathOperator{\GL}{{GL}}
\begin{document}
\maketitle

\begin{abstract}
Let $L$ be a linear set of pseudoregulus type in a line $\ell$ in $\Sigma^*=\mathrm{PG}(t-1,q^t)$, $t=5$ or $t>6$. We provide examples of $q$-order canonical subgeometries $\Sigma_1,\, \Sigma_2 \subset \Sigma^*$ such that there is a $(t-3)$-subspace $\Gamma \subset \Sigma^*\setminus (\Sigma_1 \cup \Sigma_2 \cup \ell)$ with the property that for $i=1,2$, $L$ is the projection of $\Sigma_i$ from center $\Gamma$ and there exists no collineation $\phi$ of $\Sigma^*$ such that $\Gamma^{\phi}=\Gamma$ and $\Sigma_1^{\phi}=\Sigma_2$.
 
Condition (ii) given in Theorem 3 in Lavrauw and Van de Voorde (Des.\ Codes Cryptogr.\ 56:89--104, 2010) states the existence of a collineation between the projecting configurations (each of them consisting of a center and a subgeometry), which give rise by means of projections to two linear sets. It follows from our examples that this condition is not necessary for the equivalence of two linear sets as stated there. We characterize the linear sets for which the condition above is actually necessary.

\bigskip
{\bf Keywords:} linear set, subgeometry, finite field, finite projective space, collineation.

\bigskip
{\bf MSC:} 51E20
\end{abstract}

\section{Introduction}

If $V$ is a vector space over  the finite field $\F_{q^t}$, then $\PG_{q^t}(V)$ denotes the projective space whose points are the one-dimensional $\F_{q^t}$-subspaces of $V$. If $V$ has dimension $n$ over $\F_{q^t}$, then $\PG_{q^t}(V)=\PG(n-1,q^t)$. For a point set $T\subset \PG(n-1,q^t)$ we denote by $\la T \ra$ the projective subspace of $\PG(n-1,q^t)$ spanned by the points in $T$. For $m \mid t$ and a set of elements $S\subset V$ we denote by $\la S \ra_{q^m}$ the $\F_{q^m}$-vector subspace of $V$ spanned by the vectors in $S$. For the rest of the paper 
we assume that $q=p^e$ is a power of the prime $p$. 

Let $R=\F_{q^t}^r$. The \emph{field reduction map} \cite{LaVa2015} $\cF_{r,t,q}$ is a map from the points of $\PG_{q^t}(R)=\PG(r-1,q^t)$ to the $(t-1)$-subspaces of $\PG_q(R)=\PG(rt-1,q)$. 
Let $P=\PG_{q^t}(T)$ be a point of $\PG(r-1,q^t)$, where $T$ is a one-dimensional $\F_{q^t}$-subspace of $R$. 
Then $\cF_{r,t,q}(P):=\PG_q(T)$. As $\dim_{\F_q}(T)=t$, it follows that $\cF_{r,t,q}(P)$ is a $(t-1)$-dimensional subspace of $\PG(rt-1,q)$. Denote the point set of $\PG(r-1,q^t)$ by $\cP$. 
Then $\cF_{r,t,q}(\cP)$ is a Desarguesian $(t-1)$-spread of $\PG(rt-1,q)$ \cite{Lu1999}. 
Let $S$ be a subspace of $\PG(rt-1,q)=\PG_q(R)$, then 
\[ \cB(S):=\{P \in \PG(r-1,q^t) \colon \cF_{r,t,q}(P)\cap S \neq \emptyset\}.\]
When $S$ is an $\F_q$-vector space of $R$, then we define $\cB(S)$ analogously.
A point set $L \subseteq \PG(r-1,q^t)$ is said to be \emph{$\F_q$-linear}
(or just \emph{linear}) \emph{of rank $n$} if $L=\cB(S)$ for some $(n-1)$-subspace $S\subseteq \PG(rt-1,q)$.
If $L$ has size $(q^n-1)/(q-1)$ (which is the maximum size for a linear set of rank $n$),
then $L$ is a \emph{scattered} linear set.
For further information on linear sets see \cite{LaVa2015,Po2010}.

\begin{definition}
Let $\PG_{q^t}(V)=\PG(n-1,q^t)$ and let $W$ be an $\F_q$-vector subspace of $V$. If $\dim_{\,\F_q}(W)=\dim_{\,\F_{q^t}}(V)=n$ and $\la W \ra_{q^t}=V$, then $\Sigma'=\{ \la w \ra_{q^t} \colon w\in W^*\}$ is a \emph{canonical subgeometry} of $\PG_{q^t}(V)$.
\end{definition}

Let $\Sigma'\cong\PG(n-1,q)$ be a canonical subgeometry of $\Sigma^*=\PG(n-1,q^t)$. Let $\Gamma \subset \Sigma^* \setminus \Sigma'$  be an $(n-1-r)$-subspace and let $\Lambda \subset \Sigma^* \setminus \Gamma$ be an $(r-1)$-subspace of $\Sigma^*$. The projection of $\Sigma'$ from {\it center} 
$\Gamma$ to {\it axis} $\Lambda$ is the point set
\begin{equation}
\label{proj}
L=p_{\,\Gamma,\,\Lambda}(\Sigma'):=\{\la \Gamma, P \ra \cap \Lambda \colon P\in \Sigma'\}.
\end{equation}

In \cite{LuPo2004} Lunardon and Polverino characterized linear sets as projections of canonical subgeometries. They proved the following.

\begin{theorem}[{\cite[Theorems 1 and 2]{LuPo2004}}]
\label{LuPo}
Let $\Sigma^*$, $\Sigma'$, $\Lambda$, $\Gamma$ and $L=p_{\,\Gamma,\,\Lambda}(\Sigma')$ be defined as above. Then $L$ is an $\F_q$-linear set of rank $n$ and $\la L \ra=\Lambda$. 
Conversely, if $L$ is an $\F_q$-linear set of rank $n$ of $\Lambda=\PG(r-1,q^t)\subset \Sigma^*$ and $\la L \ra=\Lambda$, then there is an $(n-1-r)$-subspace $\Gamma$ disjoint from $\Lambda$ and a canonical subgeometry $\Sigma'\cong\PG(n-1,q)$ disjoint from  $\Gamma$ such that $L=p_{\,\Gamma,\,\Lambda}(\Sigma')$.
\end{theorem}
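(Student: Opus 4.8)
\medskip
\noindent\textbf{Proof plan.}
The statement is a linear-algebra fact once projective subspaces of $\Sigma^*=\PG_{q^t}(V)$, $\dim_{\F_{q^t}}V=n$, are replaced by the $\F_{q^t}$-subspaces of $V$ they correspond to, so the plan is to make that translation and argue twice. The first point I would record is that, because $\Gamma\cap\Lambda=\emptyset$ and $(n-1-r)+(r-1)+1=n-1$, the $\F_{q^t}$-subspaces $U$ and $V_\Lambda$ (with $\Gamma=\PG_{q^t}(U)$, $\Lambda=\PG_{q^t}(V_\Lambda)$) give a direct sum $V=U\oplus V_\Lambda$. Let $\pi\colon V\to V_\Lambda$ be the projection along $U$. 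The computational core is the identity $\langle\Gamma,\langle v\rangle_{q^t}\rangle\cap\Lambda=\langle\pi(v)\rangle_{q^t}$, valid for every $v\in V\setminus U$: one just checks that $(U+\F_{q^t}v)\cap V_\Lambda$ is one-dimensional and spanned by $\pi(v)$. Everything about the projection $p_{\Gamma,\Lambda}$ follows from this.

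For the direct implication, let $W$ be the $\F_q$-subspace of $V$ with $\Sigma'=\{\langle w\rangle_{q^t}:w\in W^*\}$, so $\dim_{\F_q}W=n$ and $\langle W\rangle_{q^t}=V$ by the definition of a canonical subgeometry. From $\Sigma'\cap\Gamma=\emptyset$ one gets $W\cap U=\{0\}$, hence $\pi|_W$ is $\F_q$-linear and injective, so $\dim_{\F_q}\pi(W)=n$; and $\langle\pi(W)\rangle_{q^t}=\pi(\langle W\rangle_{q^t})=V_\Lambda$ because $\pi$ is $\F_{q^t}$-linear. Substituting into the projection formula gives $L=\{\langle\pi(w)\rangle_{q^t}:w\in W^*\}=\cB(\pi(W))$, which is an $\F_q$-linear set of rank $n$, and $\langle L\rangle=\PG_{q^t}(V_\Lambda)=\Lambda$.

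For the converse I would write $L=\cB(W_0)$ with $W_0\subseteq V_\Lambda$ an $\F_q$-subspace of $\F_q$-dimension $n$, observing that $\langle L\rangle=\Lambda$ translates exactly into $\langle W_0\rangle_{q^t}=V_\Lambda$. Pick an $\F_{q^t}$-subspace $U$ of $V$ with $V=U\oplus V_\Lambda$, set $\Gamma=\PG_{q^t}(U)$, and choose an $\F_q$-basis $b_1,\dots,b_n$ of $W_0$; since these span $V_\Lambda$ over $\F_{q^t}$, after renumbering $b_1,\dots,b_r$ is an $\F_{q^t}$-basis of $V_\Lambda$. Fixing an $\F_{q^t}$-basis $c_{r+1},\dots,c_n$ of $U$, put
\[
W:=\langle b_1,\dots,b_r,\ b_{r+1}+c_{r+1},\dots,b_n+c_n\rangle_q .
\]
Then I would verify the three facts $\dim_{\F_q}W=n$ (an $\F_q$-dependence, read on the $U$-component, kills the $c_j$-coefficients and then the $b_i$-coefficients), $W\cap U=\{0\}$ (a vector of $W$ lying in $U$ has zero $\pi$-image, hence all coefficients vanish), and $\langle W\rangle_{q^t}=V$ (it contains $V_\Lambda$ and each $c_j$). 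These make $\Sigma':=\{\langle w\rangle_{q^t}:w\in W^*\}$ a canonical subgeometry disjoint from $\Gamma$, and $\pi(W)=W_0$ together with the direct implication yields $p_{\Gamma,\Lambda}(\Sigma')=\cB(\pi(W))=\cB(W_0)=L$.

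The direct-sum computation is where all the work is, but it is elementary; the only step resting on a hypothesis is $\langle W\rangle_{q^t}=V$ in the converse, which needs an $\F_{q^t}$-basis of $V_\Lambda$ among the $b_i$ and hence the assumption $\langle L\rangle=\Lambda$. I therefore expect the proof to be short, with the main care being the bookkeeping of the three properties of $W$.
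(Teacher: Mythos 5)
Your argument is correct. Note that the paper does not prove this statement: it is quoted from Lunardon--Polverino \cite{LuPo2004}, so there is no internal proof to compare against. Your reduction to the direct sum $V=U\oplus V_\Lambda$ with the projection $\pi$, and in the converse the choice of $W=\la b_1,\dots,b_r,\,b_{r+1}+c_{r+1},\dots,b_n+c_n\ra_q$ after extracting an $\F_{q^t}$-basis of $V_\Lambda$ from an $\F_q$-basis of $W_0$, is the standard proof of this result and is in fact the same ``lifting'' construction the paper itself uses later (the subspaces $V_i$ in the proof of Theorem \ref{generale}); all three verifications ($\dim_{\F_q}W=n$, $W\cap U=\{0\}$, $\la W\ra_{q^t}=V$) are carried out correctly, and the hypothesis $\la L\ra=\Lambda$ is used exactly where it is needed.
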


Note that when $r=n$ in Theorem \ref{LuPo}, then $L=p_{\,\Gamma,\,\Lambda}(\Sigma')=\Sigma'$, hence $L$ is a canonical subgeometry. We rephrase here a theorem by Lavrauw and Van de Voorde.

\begin{theorem}[{\cite[Theorem 3]{LaVa2010}}]
\label{counter}
For $i=1,2$, let $\Sigma_i \cong \PG(n-1,q)$ be two canonical subgeometries of $\Sigma^*=\PG(n-1,q^t)$ and let $\Gamma_i$ be two $(n-1-r)$-subspaces of $\Sigma^*$, such that $\Gamma_i \cap \Sigma_i = \emptyset$. Let $\Lambda_i \subset \Sigma^*\setminus \Gamma_i$ be two $(r-1)$-subspaces and let $L_i=p_{\,\Gamma_i,\,\Lambda_i}(\Sigma_i)$. Suppose that $L_i$ is not an $\F_q$-linear set of rank $k<n$. Then the following statements are equivalent.
\begin{enumerate}[(i)]
\item There exists a collineation $\alpha \colon \Lambda_1 \rightarrow \Lambda_2$, such that ${L_1}^{\alpha}=L_2$,
\item there exists a collineation $\beta$ of $\Sigma^*$ such that ${\Sigma_1}^{\beta}=\Sigma_2$ and ${\Gamma_1}^{\beta}=\Gamma_2$,
\item for all canonical subgeometries $\Sigma\cong\PG(n-1,q)$ in $\Sigma^*$, there exist
$\delta,\phi,\psi$ collineations of $\Sigma^*$, such that\footnote{Compositions are executed from the left to the right.} $\Sigma^{\delta}=\Sigma$, 
${\Gamma_1}^{\phi \delta}={\Gamma_2}^{\psi}$, ${\Sigma_1}^{\phi}=\Sigma$ and ${\Sigma_2}^{\psi}=\Sigma$.
\end{enumerate} 
\end{theorem}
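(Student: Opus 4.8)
\medskip\noindent\emph{Proof idea.}
The equivalence (ii)$\,\Leftrightarrow\,$(iii) is purely formal and uses only that $\GL$ maps any $\F_q$-form of the underlying $\Fqt$-vector space to any other, so that $\PGL$ is transitive on the canonical subgeometries of $\Sigma^*$. Given (ii) with a witness $\beta$ and an arbitrary canonical subgeometry $\Sigma$, choose $\phi$ with $\Sigma_1^{\phi}=\Sigma$ and set $\psi:=\beta^{-1}\phi$, $\delta:=\mathrm{id}$; then $\Sigma_2^{\psi}=\Sigma_1^{\phi}=\Sigma$, $\Sigma^{\delta}=\Sigma$ and $\Gamma_1^{\phi\delta}=\Gamma_1^{\phi}=\Gamma_2^{\psi}$, so (iii) holds. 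Conversely, applying (iii) with $\Sigma:=\Sigma_1$ and setting $\beta:=\phi\delta\psi^{-1}$ gives $\Sigma_1^{\beta}=\Sigma_1^{\psi^{-1}}=\Sigma_2$ (using $\Sigma_1^{\phi}=\Sigma_1^{\delta}=\Sigma_1$ and $\Sigma_2^{\psi}=\Sigma_1$) and likewise $\Gamma_1^{\beta}=(\Gamma_1^{\phi\delta})^{\psi^{-1}}=(\Gamma_2^{\psi})^{\psi^{-1}}=\Gamma_2$, so (ii) holds. The rank hypothesis plays no role here.

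For (ii)$\,\Rightarrow\,$(i): a collineation $\beta$ of $\Sigma^*$ with $\Sigma_1^{\beta}=\Sigma_2$ and $\Gamma_1^{\beta}=\Gamma_2$ carries $L_1=p_{\,\Gamma_1,\,\Lambda_1}(\Sigma_1)$ onto $p_{\,\Gamma_2,\,\Lambda_1^{\beta}}(\Sigma_2)$; since $\Lambda_1^{\beta}$ and $\Lambda_2$ are both complements of $\Gamma_2$ in $\Sigma^*$, the perspectivity from centre $\Gamma_2$ carrying $\Lambda_1^{\beta}$ onto $\Lambda_2$ is a collineation sending this point set onto $L_2$. Composing it with $\beta|_{\Lambda_1}$ yields a collineation $\alpha\colon\Lambda_1\to\Lambda_2$ with $L_1^{\alpha}=L_2$.

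The substantial implication is (i)$\,\Rightarrow\,$(ii). First reduce to the case $\Lambda_1=\Lambda_2=:\Lambda$, $L_1=L_2=:L$ (as point sets): a collineation between two $(r-1)$-subspaces extends to a collineation $\tilde\alpha$ of the whole $\Sigma^*$, and replacing $(\Sigma_1,\Gamma_1)$ by $(\Sigma_1^{\tilde\alpha},\Gamma_1^{\tilde\alpha})$ leaves the situation unchanged while making $\Lambda_1^{\tilde\alpha}=\Lambda_2$ and $L_1^{\tilde\alpha}=L_1^{\alpha}=L_2$. It remains to show that two projecting configurations $(\Sigma_1,\Gamma_1)$, $(\Sigma_2,\Gamma_2)$ in $\Sigma^*$ with $p_{\,\Gamma_1,\,\Lambda}(\Sigma_1)=p_{\,\Gamma_2,\,\Lambda}(\Sigma_2)=L$ are matched by a single collineation $\beta$ of $\Sigma^*$. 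Here one would use the Lunardon--Polverino model behind Theorem~\ref{LuPo}: writing $\Lambda=\PG_{q^t}(U)$, each $(\Sigma_i,\Gamma_i)$ is $\PGL$-equivalent to the pair $(\PG_{q^t}(W_i\otimes 1),\PG_{q^t}(\ker\theta_i))$ inside $\PG_{q^t}(W_i\otimes_{\F_q}\Fqt)$, where $W_i\subseteq U$ is an $\F_q$-subspace with $\cB(W_i)=L$ and $\theta_i$ is the $\Fqt$-linear extension of the inclusion $W_i\hookrightarrow U$. The equality of the two linear sets reads $\cB(W_1)=\cB(W_2)$; one would then invoke the hypothesis that $L$ is not $\F_q$-linear of rank $k<n$ to upgrade this to an equivalence $W_1^{\,g}=W_2$ for some invertible $\Fqt$-semilinear $g$ of $U$, and transport $g$ through the tensor construction to the required $\beta$.

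I expect the main obstacle to be precisely this last point: showing that ``$L$ is not $\F_q$-linear of rank $k<n$'' is strong enough to force the two representing $\F_q$-subspaces $W_1,W_2$, hence the two projecting configurations, to be equivalent. This is the only step in which the rank hypothesis is used and the delicate heart of the statement; everything else is routine bookkeeping with projections, perspectivities and the subgeometry/center correspondence. One should expect the theorem to stand or fall with whether a linear set of minimal rank $n$ spanning its ambient space admits an essentially unique representation $L=\cB(W)$ up to the equivalence above.
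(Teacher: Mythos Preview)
There is a fundamental issue here: the paper does not prove Theorem~\ref{counter}. This theorem is quoted from \cite{LaVa2010}, and the entire point of the present paper is to show that the implication $(i)\Rightarrow(ii)$ is \emph{false}. Immediately after the statement the authors write ``We are going to prove that the implication $(i)\Rightarrow(ii)$ does not hold,'' and in subsection~1.3 they identify what they regard as the flawed step in the original proof in \cite{LaVa2010}. So there is no proof in the paper to compare your attempt against; rather, the paper supplies counterexamples (linear sets of pseudoregulus type in $\PG(1,q^t)$ for $t=5$ or $t>6$).

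That said, your analysis is perceptive and lands exactly on the right spot. Your reductions for $(ii)\Leftrightarrow(iii)$ and $(ii)\Rightarrow(i)$ are correct and routine. For $(i)\Rightarrow(ii)$ you reduce to two $\F_q$-subspaces $W_1,W_2\subseteq U$ with $\cB(W_1)=\cB(W_2)=L$ and then say: ``one would then invoke the hypothesis that $L$ is not $\F_q$-linear of rank $k<n$ to upgrade this to an equivalence $W_1^{\,g}=W_2$ for some invertible $\F_{q^t}$-semilinear $g$ of $U$.'' You flag this as the delicate heart of the matter and conjecture that the theorem ``stand[s] or fall[s]'' with whether such a $g$ exists. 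The paper shows that it falls: this is precisely condition~(A) in Section~\ref{sec:5}, and Theorem~\ref{generale} proves that whenever $(L,n)$ fails condition~(A) one can build two projecting configurations with the same center and the same projected linear set for which no collineation $\beta$ as in $(ii)$ exists. Conversely, Theorem~\ref{plusproperty} shows that $(i)\Rightarrow(ii)$ does hold once condition~(A) is assumed. The minimal-rank hypothesis in Theorem~\ref{counter} is strictly weaker than condition~(A) and is not enough; the pseudoregulus examples have minimal rank $n=t$ yet violate~(A).

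In short: your proof sketch is honest about its gap, and that gap is genuine and unfillable. The correct takeaway is that the equivalence as stated is not a theorem; the right replacement hypothesis is condition~(A).
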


We are going to prove that the implication $(i) \Rightarrow (ii)$ does not hold. 
In Sections \ref{thetapower} and \ref{linearset}, we will show this in the case when $L_1=L_2$
is a linear set of pseudoregulus type in $\PG(1,q^n)$, for $n=5$ or $n>6$. 
In Section \ref{sec:5} a characterization will be given of the linear sets for which $(i) \Rightarrow (ii)$ holds.

In this paper, the results which lead to counterexamples are presented in a general setting.
In the remainder of this introduction some hints are given to a better understanding of the key facts.

\subsection{}
\label{minimal}

The idea of a counterexample arose from the investigation of the linear sets of pseudoregulus type
\cite{LuMaPoTr2014,DoDu9}. A minimal counterexample to Theorem \ref{counter} $(i) \Rightarrow (ii)$ can be obtained as follows. 
In $\PG(4,q^5)$, with coordinates $X_1,X_2,\ldots,X_5$, let $\Gamma$ be the plane of equations $X_1=X_2=0$,
and let $\ell$ be the line $X_3=X_4=X_5=0$.
The sets
\[   \Sigma_1=\{\la(\lambda,\lambda^q,\lambda^{q^2},\lambda^{q^3},\lambda^{q^4})\ra_{q^5}\colon\lambda\in\F_{q^5}^*\} \]
and
\[  \Sigma_2=\{\la(\lambda,\lambda^{q^2},\lambda^q,\lambda^{q^3},\lambda^{q^4})\ra_{q^5}\colon\lambda\in\F_{q^5}^*\} \]
are canonical subgeometries.
Defining $\PG(4,q)=\{\la x\ra_q \colon x\in\F_{q^5}^*\}$, the map $\varphi_1:\PG(4,q)\rightarrow\Sigma_1$
defined by $\la x\ra_q^{\varphi_1}=\la x,x^q,x^{q^2},x^{q^3},x^{q^4}\ra_{q^5}$ is a collineation, as well as
the map $\varphi_2:\PG(4,q)\rightarrow\Sigma_2$ similarly defined.
By \cite{LuMaPoTr2014,DoDu9}, the projections $p_{\,\Gamma,\,\ell}(\Sigma_1)$ and
$p_{\,\Gamma,\,\ell}(\Sigma_2)$ coincide, and are a linear set of pseudoregulus type, say 
$\mathbb{L}=\{\la(\lambda,\lambda^q,0,0,0)\ra_{q^5}\colon\lambda\in\F_{q^5}^*\}$.

Now assume that as stated in  Theorem \ref{counter} $(ii)$
a collineation $\beta$ of $\PG(4,q^5)$ exists such that $\Gamma^\beta=\Gamma$, and
$\Sigma_1^\beta=\Sigma_2$.
It is shown in Theorem \ref{nocollin} that a projectivity exists having the same two properties; call it $\phi$.
By the arguments in Lemma \ref{main2} in Section \ref{linearset}, the collineation
$\varphi_2\phi^{-1}\varphi_1^{-1}$ of $\PG(4,q)$ is the composition of a projectivity and either the map
$\la x\ra_q\mapsto\la x^{\theta_1}\ra_q$ (cfr.\ (\eqref{e:theta-s}), or the map 
\[\la x\ra_q\mapsto\la x^{-\theta_1}\ra_q=\la x^{\theta_4-\theta_1}\ra_q=\la x^{q^2\theta_2}\ra,\]
which in turn is the composition of a projectivity and $\la x\ra_q\mapsto\la x^{\theta_2}\ra_q$.
However, by Corollary \ref{corollar} in Section \ref{thetapower}, no map of type
$\la x\ra_q\mapsto\la x^{\theta_s}\ra_q$ with $0<s<t=5$ is a collineation, and this is a contradiction.

\subsection{}

In section \ref{sec:5} the same counterexamples are found with essentially distinct arguments.
Theorem \ref{generale} states that a counterexample to Theorem \ref{counter} $(i) \Rightarrow (ii)$
can always be constructed in case the following condition does not hold: 
 (A) For any two $(n-1)$-subspaces $U,U' \subset \PG(rt-1,q)$, such that $\cB(U)=L=\cB(U')$, 
a collineation $\gamma\in \mathrm{P\Gamma L}(rt,q)$ exists, such that $U^{\gamma}=U'$, 
$\gamma$ preserves the Desarguesian spread $\cF_{r,t,q}(\cP)$, and the induced map on $\PG(r-1,q^t)$ is a collineation.

If $L=\cB(\cQ_{t-1,q})$, where $\cQ_{t-1,q}\subset\PG(2t-1,q)$ is the nonsingular hypersurface
of degree $t$ studied in \cite{LaShZa2013}, then $L$ is a linear set of pseudoregulus type
in $\PG(1,q^t)$.

In \cite{LaShZa2013} all linear subspaces contained in $\cQ_{t-1,q}$ are described for $q\ge t$.
For $t\ge4$ there exist two $(t-1)$-subspaces, say $U$ and $U'$, that are contained in $\cQ_{t-1,q}$,
and such that for any element $\gamma$ of the stabilizer of $\cQ_{t-1,q}$,
preserving the standard Desarguesian spread, it holds $U^\gamma\neq U'$.
For $t\ge 5$, $t\neq6$, it is possible to choose $U$ and $U'$ with the additional property $\cB(U)=L=\cB(U')$.
This leads to the counterexample of the previous subsection and its generalisations.

\subsection{}

In the opinion of the authors of this paper, the proof of \cite[Theorem 3]{LaVa2010} contains a wrong argument.
The map $\delta$ defined at p. 93, line 21, is declared to be a projectivity, 
but is dealt with as a linear map e.g. in (1), as well as $p_2$ is.
Such a $\delta$ is assumed to satisfy both conditions \textit{(i)}
$\delta$ maps an $\F_q$-vector space associated with $\Sigma_1^\chi$ onto an $\F_q$-vector space
associated with $\Sigma_2$, and \textit{(ii)} $p_2(a_i)=p_2(a_i^\delta)$, $i=0,\ldots,m$.
However, in the case of the minimal example in subsection \ref{minimal}, 
no such $\delta$ exists.

\section{Linear sets of pseudoregulus type in a projective line}

A family of scattered $\F_q$-linear sets of rank $tm$ of $\PG(2m-1,q^t)$, called of {\it pseudoregulus type}, have been introduced in \cite{MaPoTr2007} for $m=2$ and $t=3$, further generalized in \cite{LaVa2013} for $m\geq 2$ and $t=3$ and finally in \cite{LuMaPoTr2014} for $m\geq 1$ and $t\geq 2$ (for $t=2$ they are the same as Baer subgeometries, see \cite[Remark 3.4]{LuMaPoTr2014}). We will only consider linear sets of pseudoregulus type in $\PG(1,q^t)$. It has been proved in \cite[Section 4]{LuMaPoTr2014} and in \cite[Remark 2.2]{DoDu9} that all linear sets of pseudoregulus type in $\PG(1,q^t)$ are $\mathrm{PGL}(2,q^t)$-equivalent and hence we can define them as follows. 

\begin{definition}[\cite{LuMaPoTr2014,DoDu9}]
\label{pseudo}
A point set $L$ of $\PG_{q^t}(\F_{q^t}^2)=\PG(1,q^t)$, $t\geq 2$, is called a \emph{linear set of pseudoregulus type} if $L$ is projectively equivalent to
\begin{equation}
\label{psedefi}
\mathbb{L}:=\{\la (\lambda,\lambda^q)\ra_{q^t} \colon \lambda\in \F_{q^t}^*\}.
\end{equation}
\end{definition}

\begin{definition}
Let $t\geq 2$ be an integer and let $\pi$ be a permutation 
of $N_{t-1}:=\{0,1,2,\ldots,t-1\}$. We define $\Sigma_{\pi}$ as the following set of points in $\PG_{q^t}(\F_{q^t}^t)=\PG(t-1,q^t)$.
\begin{equation}
\label{sigmai}
\Sigma_{\pi}:=\{\la(\alpha^{q^{\pi(0)}},\alpha^{q^{\pi(1)}},\alpha^{q^{\pi(2)}},\ldots,\alpha^{q^{\pi(t-1)}})\ra_{q^t} \colon \alpha \in \F_{q^t}^*\}.
\end{equation}
\end{definition}

\begin{sloppypar}
\begin{proposition}
Consider $\F_{q^t}$ as a vector space over $\F_q$ and let $\Sigma=\PG_q(\F_{q^t})\cong \PG(t-1,q)$. Let $\pi$ be a permutation of $N_{t-1}$. Then the following statements hold.
\begin{enumerate}
\item $S_{\pi}:=\{(\alpha^{q^{\pi(0)}},\alpha^{q^{\pi(1)}},\alpha^{q^{\pi(2)}},\ldots,\alpha^{q^{\pi(t-1)}}) \colon \alpha \in \F_{q^t}\}$ is an $\F_q$-subspace of $\F_{q^t}^t$.
\item $\phi_{\pi}' \colon \F_{q^t} \rightarrow S_{\pi} \colon \alpha \mapsto (\alpha^{q^{\pi(0)}},\alpha^{q^{\pi(1)}},\alpha^{q^{\pi(2)}},\ldots,\alpha^{q^{\pi(t-1)}})$ is a vector space isomorphism.
\item $\la \Sigma_{\pi} \ra=\PG(t-1,q^t)$.
\item The map 
\begin{equation}
\label{def_phipi}
\phi_{\pi} \colon \Sigma \rightarrow \Sigma_{\pi} \colon \la \alpha \ra_q \mapsto \la (\alpha^{q^{\pi(0)}},\alpha^{q^{\pi(1)}},\alpha^{q^{\pi(2)}},\ldots,\alpha^{q^{\pi(t-1)}})\ra_{q^t}
\end{equation}
is a collineation.
\end{enumerate}
\end{proposition}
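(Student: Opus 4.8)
The plan is to dispatch the four statements in order, noting at the outset that everything except the spanning statement (3) and the injectivity in (4) is pure linearity bookkeeping. For (1) and (2): each coordinate function $\alpha\mapsto\alpha^{q^{\pi(i)}}$ is a power of the Frobenius automorphism $x\mapsto x^q$ of $\Fqt$, hence an $\F_q$-linear bijection; consequently $\phi_\pi'$ is $\F_q$-linear as a map $\Fqt\to\Fqt^t$, and $S_\pi=\phi_\pi'(\Fqt)$ is an $\F_q$-subspace, which is (1). Since already the first coordinate map $\alpha\mapsto\alpha^{q^{\pi(0)}}$ is injective, $\phi_\pi'(\alpha)=0$ forces $\alpha=0$; thus $\phi_\pi'$ is an injective $\F_q$-linear map onto $S_\pi$, i.e.\ a vector space isomorphism, which is (2), and in particular $\dim_{\F_q}S_\pi=t$.

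For (3) I would argue by duality. Suppose $\la S_\pi\ra_{q^t}\neq\Fqt^t$; then some nonzero $(c_0,\dots,c_{t-1})\in\Fqt^t$ satisfies $\sum_{i=0}^{t-1}c_i\alpha^{q^{\pi(i)}}=0$ for all $\alpha\in\Fqt$. The associated linearized polynomial $\sum_i c_iX^{q^{\pi(i)}}$ has degree at most $q^{t-1}$ yet has all $q^t$ elements of $\Fqt$ as roots; since $q^t>q^{t-1}$ it is the zero polynomial, and as $\pi$ is a permutation its monomials $X^{q^{\pi(0)}},\dots,X^{q^{\pi(t-1)}}$ are pairwise distinct, so every $c_i$ vanishes --- a contradiction. (Alternatively: for an $\F_q$-basis $e_1,\dots,e_t$ of $\Fqt$, the matrix $\bigl(e_k^{q^{\pi(i)}}\bigr)$ is a column permutation of a Moore matrix, nonsingular because $e_1,\dots,e_t$ are $\F_q$-independent, so the images $\phi_\pi'(e_k)$ are $\Fqt$-independent.) This is the step I expect to be the only real obstacle, being the one place where a genuine finite-field fact is required rather than bookkeeping.

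For (4): by (1)--(3) the space $S_\pi$ satisfies the conditions in the definition of a canonical subgeometry, and $\Sigma_\pi$ is exactly the subgeometry it determines. The map $\phi_\pi$ is well defined because $\phi_\pi'$ is $\F_q$-linear, so replacing $\alpha$ by $c\alpha$ with $c\in\Fq^*$ replaces $\phi_\pi'(\alpha)$ by $c\,\phi_\pi'(\alpha)$, and injective, so $\alpha\neq0$ gives $\phi_\pi'(\alpha)\neq0$; it is onto $\Sigma_\pi$ by definition. For injectivity, if $\la\phi_\pi'(\alpha)\ra_{q^t}=\la\phi_\pi'(\beta)\ra_{q^t}$ with $\alpha,\beta\in\Fqt^*$, then $\beta^{q^{\pi(i)}}=c\,\alpha^{q^{\pi(i)}}$ for all $i$ and a fixed $c\in\Fqt^*$; setting $u=\beta/\alpha$ yields $u^{q^{\pi(i)}}=c$ for every $i$, and since $\pi$ is onto this includes $u^{q^0}=u^{q^1}=c$, whence $u^q=u$, i.e.\ $u\in\Fq^*$ and $\la\beta\ra_q=\la\alpha\ra_q$. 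Finally $\phi_\pi$, being induced by the $\F_q$-linear isomorphism $\phi_\pi'$ of the underlying vector spaces $\Fqt$ and $S_\pi$ --- composed with the identification $\la w\ra_q\leftrightarrow\la w\ra_{q^t}$ of $\PG_q(S_\pi)$ with $\Sigma_\pi$, whose bijectivity is precisely the computation just made --- preserves incidence in both directions and is therefore a collineation.
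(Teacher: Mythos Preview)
Your proof is correct. The overall structure matches the paper's: (1) and (2) are declared routine, (3) is the substantive step, and (4) is derived from the previous parts. The only difference worth noting is in (3): your primary argument is the linearized-polynomial one (a nonzero $\sum_i c_i X^{q^{\pi(i)}}$ of degree $\le q^{t-1}$ cannot vanish on all of $\F_{q^t}$), whereas the paper goes directly via the Moore determinant---your parenthetical alternative is exactly the paper's route, citing the nonsingularity of $\bigl(e_k^{q^i}\bigr)$ for an $\F_q$-basis $e_1,\dots,e_t$ and observing that $M_\pi$ is a column permutation of $M_{id}$. For (4) the paper simply asserts that (1), (2), (3) together imply it, while you spell out well-definedness, injectivity, and incidence preservation; your extra care there is harmless and arguably clearer.
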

\end{sloppypar}
\begin{proof}
1. and 2. are trivial, and 1.,\,2.,\,3. together imply 4., so it is enough to show 3.
Let $\alpha_1,\alpha_2,\ldots,\alpha_t\in \F_{q^t}$ and denote by $M_{\pi}$ the $t \times t$ matrix over $\F_{q^t}$ whose $j$-th row is $\phi_{\pi}'(\alpha_j)$ for $j=1,2,\ldots,t$. According to {\cite[Lemma 3.51]{FiFi}}, $\det(M_{id})\neq 0$ if and only if $\alpha_1,\alpha_2,\ldots,\alpha_t$ are linearly independent over 
$\F_q$. As $M_{\pi}$ can be obtained by permuting the columns of $M_{id}$, it follows that $\det(M_{\pi})\neq 0$ if and only if $\det(M_{id})\neq 0$.
As $\dim_{\F_q}(\F_{q^t})=t$, it follows that $\dim_{\F_{q^t}}(S_{\pi})=t$ and hence 3. follows.
\qed
\end{proof}

Let $\Sigma^*=\PG(t-1,q^t)$, and denote the coordinates in $\Sigma^*$ by $X_1$, $X_2$, $\ldots,X_t$. Let $\Lambda \cong \PG(1,q^t)$ be the line $X_3=\ldots=X_{t}=0$ and let $\Gamma\cong \PG(t-3,q^t)$ be the $(t-3)$-subspace $X_1=X_2=0$ in $\Sigma^*$. Let $\pi$ be a permutation of $N_{t-1}$, it is easy to see that if $\gcd(\pi(1)-\pi(0),t)=1$, then $p_{\, \Gamma,\, \Lambda}(\Sigma_{\pi})=\mathbb{L}$, see for example \cite[Remark 4.2]{LuMaPoTr2014}. 
If $\phi$ is a collineation of $\Sigma^*$ such that $\Sigma_{id}^{\phi}=\Sigma_{\pi}$, then 
\begin{equation}
\label{e:phi-tilde}
\widetilde{\phi} :=\phi_{\pi}\phi^{-1}\phi_{id}^{-1}
\end{equation}
(see \eqref{def_phipi}) is a collineation of $\Sigma$. To study the action of $\widetilde{\phi}$ we need some results regarding the $\Theta_s$ map defined in the next section.

\section{\texorpdfstring{The $\Theta_s$ map}{The Theta s map}}
\label{thetapower}

As before, let $\Sigma$ denote the $\PG(t-1,q)$ associated to $\F_{q^t}$. 


\begin{definition}
\label{theta}
For an integer $s\geq -1$ let $\theta_s$ denote the number of points of $\PG(s,q)$, that is 
\begin{equation}
\label{e:theta-s}
\theta_s=\frac{q^{s+1}-1}{q-1}.
\end{equation}
The map $\Theta_s \colon \Sigma \rightarrow \Sigma$ is defined as follows. For each $x\in \F_{q^t}^*$ let
\[\Theta_s(\la x \ra_q)=\la x^{\theta_s} \ra_q.\]
\end{definition}

%

\begin{remark}
If $s,\,d \geq -1$ are two integers, then $\Theta_s=\Theta_d$ if and only if $s\equiv d \pmod t$. 
As we do not use this result, the proof is left to the reader.
\end{remark}
 
In Section \ref{linearset} we show the following. If $\phi$ is a collineation such that $\Sigma_{id}^{\phi}=\Sigma_{\pi}$ and $\Gamma^{\phi}=\Gamma$, then $\widetilde{\phi}=\Theta_s \Omega$ (see \eqref{e:phi-tilde}) for some collineation $\Omega$ and integer $0\leq s < t$. 
Note that $\Theta_0$ is the identity. 
In Corollary \ref{corollar}, we prove that $\Theta_s$, with $0< s <t$, is not a collineation of $\PG(t-1,q)$. It will follow that in most cases $\widetilde{\phi}$ is not a collineation and hence such $\phi$ cannot exist. 

To determine whether $\Theta_s$ is a collineation or not we need some results regarding difference sets. A \emph{$(v,k,\lambda)$-difference set} is a $k$-subset $D$ of a group $G$ of order $v$ such that the list of non-zero differences (if the group is written additively) contains each non-zero group element exactly $\lambda$ times. If the group is cyclic etc., then we say that the difference set is cyclic etc. 
For our purposes here, it is enough to consider cyclic difference sets, so from now on we assume that $G$ is cyclic. 
Let $D$ be a $(v,k,\lambda)$-difference set of $G$ and let $j$ be an integer. By $D+j$ and $jD$ we mean $\{d + j \colon d \in D \}$ modulo $v$ and $\{ jd \colon d\in D\}$ modulo $v$. Note that $D+j$ is also a $(v,k,\lambda)$-difference set of $G$. 
If $\gcd(j,v)\neq 1$, then let $t:=v/\gcd(j,v)$ and let $d_1-d_2=t$, where $d_1,\,d_2\in D$. As $v \mid tj$, it follows that $jd_1 \equiv jd_2 \pmod v$ and hence $|jD|<|D|$. On the other hand, if $\gcd(j,v)=1$, then $jD$ is also a $(v,k,\lambda)$-difference set of $G$. 
The integer $m$ is a \emph{(numerical) multiplier} of $D$ if $mD=D+g$ for some integer $g$. 
The \emph{trace} of $\beta\in \F_{q^t}$ is $\tr(\beta)=\tr_{\F_{q^t}/\F_q}(\beta)=\sum_{i=0}^{t-1}\beta^{q^i}$.   

\begin{theorem}[{\cite[Theorem 2.1.1]{Pott}}]
\label{Singer}
Let $\alpha$ be a generator of the multiplicative group of $\F_{q^t}$, $t\geq 3$. 
The set of integers $D:=\{i \colon 0 \leq i < \theta_{t-1}, \tr(\alpha^i)=0 \}$ modulo $\theta_{t-1}$ forms a (cyclic) difference set in $\mathbb{Z}_{\theta_{t-1}}$ (written additively), with parameters $\left(\theta_{t-1},\theta_{t-2},\theta_{t-3}\right)$. 
The set of points of $\PG_q(\F_{q^t})\cong \PG(t-1,q)$ which corresponds to the set $D$, i. e. the point set $\cH_D:=\{\la \alpha^i \ra_q \colon i \in D\}$, is a hyperplane of $\PG(t-1,q)$. Moreover, the hyperplanes of $\PG(t-1,q)$ are the sets 
$\cH_{D+j}:=\{\la \alpha^i \ra_q \colon i \in D+j\}$, where $0\leq j < \theta_{t-1}$.
\end{theorem}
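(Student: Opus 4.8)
The plan is to build the classical Singer cycle on $\PG(t-1,q)=\PG_q(\F_{q^t})$ and to read all three assertions off it. First I would note that multiplication by $\alpha$ is an $\F_q$-linear bijection of $\F_{q^t}$, so it induces a projectivity $\sigma$ of $\PG(t-1,q)$ with $\la x\ra_q^{\sigma}=\la\alpha x\ra_q$. Here $\sigma^j$ is the identity exactly when $\alpha^j\in\F_q^*$, i.e. when $\theta_{t-1}\mid j$, so $\sigma$ has order $\theta_{t-1}$; and since $\alpha$ generates $\F_{q^t}^*$, the group $\langle\sigma\rangle$ is transitive on the $\theta_{t-1}$ points of $\PG(t-1,q)$, hence sharply transitive. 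Consequently $i\mapsto\la\alpha^i\ra_q$ is a bijection $\mathbb{Z}_{\theta_{t-1}}\to\PG(t-1,q)$ intertwining the translation $i\mapsto i+j$ with $\sigma^j$; in particular $\cH_D^{\sigma^j}=\cH_{D+j}$ for every $j$.

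Next I would identify $\cH_D$ with a hyperplane. The trace $\tr\colon\F_{q^t}\to\F_q$ is a nonzero $\F_q$-linear form, so $\ker\tr$ is a $(t-1)$-dimensional $\F_q$-subspace, and $\tr(v)=0$ forces $\la v\ra_q\subseteq\ker\tr$; hence $\cH_D=\{\la\alpha^i\ra_q\colon i\in D\}$ is exactly the point set of the hyperplane $\PG_q(\ker\tr)$. This yields $|D|=\theta_{t-2}$, and applying the collineations $\sigma^j$ shows that every $\cH_{D+j}$ is a hyperplane too. To obtain the last assertion I then need the $\theta_{t-1}$ hyperplanes $\cH_{D+j}$ ($0\le j<\theta_{t-1}$) to be pairwise distinct; this is the only step that requires an argument, which I discuss below. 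Granting it, and using that $\PG(t-1,q)$ has exactly $\theta_{t-1}$ hyperplanes (dually to its $\theta_{t-1}$ points), the list $\{\cH_{D+j}\}$ is exhaustive.

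For the difference count I would fix $g\not\equiv0\pmod{\theta_{t-1}}$ and assign to each ordered pair $(d_1,d_2)\in D\times D$ with $d_1-d_2=g$ the point $\la\alpha^{d_2}\ra_q$, which lies in $\cH_D$ (as $d_2\in D$) and in $\cH_D^{\sigma^{-g}}=\cH_{D-g}$ (as $d_1=d_2+g\in D$); this is a bijection between such pairs and $\cH_D\cap\cH_{D-g}$. By the distinctness just invoked, $\cH_D$ and $\cH_{D-g}$ are two different hyperplanes, so they meet in a $(t-3)$-subspace, which has $\theta_{t-3}$ points (here $t\ge3$ enters). Hence every nonzero $g$ has exactly $\theta_{t-3}$ representations as a difference from $D$, i.e. $D$ is a $(\theta_{t-1},\theta_{t-2},\theta_{t-3})$-difference set.

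The main obstacle is precisely the distinctness of the translates $\cH_{D+j}$, equivalently $D+m\ne D$ for $0<m<\theta_{t-1}$ (equivalently, $\langle\sigma\rangle$ acts on the hyperplanes with trivial stabilizers — the dual Singer cycle). I would settle it by a coprimality argument: if $D+m=D$, then $D$ is a union of cosets of the cyclic subgroup $\langle m\rangle\le\mathbb{Z}_{\theta_{t-1}}$, so $|\langle m\rangle|=\theta_{t-1}/\gcd(m,\theta_{t-1})$ divides $|D|=\theta_{t-2}$; but $\theta_{t-1}=q\theta_{t-2}+1$ gives $\gcd(\theta_{t-1},\theta_{t-2})=1$, so $\gcd(m,\theta_{t-1})=\theta_{t-1}$, contradicting $0<m<\theta_{t-1}$. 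Everything else is the standard dictionary between the Singer cycle, the hyperplanes of $\PG(t-1,q)$, and the trace form.
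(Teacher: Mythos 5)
Your proof is correct. The paper itself offers no argument for this statement: it is quoted verbatim as a known result from Pott's book (Theorem 2.1.1 there), so there is nothing in the paper to compare against step by step. What you have written is the standard, self-contained derivation via the Singer cycle, and every step checks out: the sharply transitive action of $\sigma\colon\la x\ra_q\mapsto\la\alpha x\ra_q$ identifies $\mathbb{Z}_{\theta_{t-1}}$ with the point set and intertwines translation with $\sigma^j$; $\cH_D=\PG_q(\ker\tr)$ is a hyperplane because $\tr$ is a nonzero $\F_q$-linear form (nonzero since $x+x^q+\cdots+x^{q^{t-1}}$ has degree $q^{t-1}<q^t$ --- worth one line); the coprimality $\gcd(\theta_{t-1},\theta_{t-2})=1$ from $\theta_{t-1}=q\theta_{t-2}+1$ correctly forces the translates to be pairwise distinct, hence to exhaust all $\theta_{t-1}$ hyperplanes; and the bijection between ordered pairs with difference $g$ and the points of $\cH_D\cap\cH_{D-g}$, a $(t-3)$-subspace of $\theta_{t-3}$ points, gives the parameter $\lambda=\theta_{t-3}$ (this is where $t\ge 3$ is used). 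You have correctly isolated the distinctness of the translates as the one nontrivial step and handled it cleanly; the argument is sound and, if anything, more informative than the citation in the paper.
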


The difference sets constructed in Theorem \ref{Singer} are called \emph{classical Singer difference sets}. The next theorem describes the multipliers of these difference sets. 

\begin{theorem}[{\cite[Corollary 1.3.4 and Proposition 3.1.1]{Pott}}]
\label{ppower}
Let $D$ be a classical Singer difference set with parameters as in Theorem \ref{Singer}. Then the multipliers of $D$ are the powers of $p$ modulo $\theta_{t-1}$.
\end{theorem}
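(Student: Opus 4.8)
The plan is to prove the two inclusions separately: every power of $p$ is a multiplier of $D$, and conversely every multiplier of $D$ is congruent to a power of $p$ modulo $\theta_{t-1}$. For the first inclusion I would argue directly from the description of $D$ in Theorem~\ref{Singer}: since $q=p^{e}$, the $p$-th power map commutes with the trace,
\[
\tr(\beta^{p})=\sum_{i=0}^{t-1}\beta^{pq^{i}}=\Bigl(\sum_{i=0}^{t-1}\beta^{q^{i}}\Bigr)^{p}=\tr(\beta)^{p},
\]
so $\tr(\alpha^{pi})=\tr(\alpha^{i})^{p}$, and as $\F_q$ has characteristic $p$ this gives $\tr(\alpha^{i})=0\iff\tr(\alpha^{pi})=0$. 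The trace is $\F_q$-linear and $\alpha^{\theta_{t-1}}$ generates $\F_q^{*}$, so whether $\tr(\alpha^{j})=0$ depends only on $j$ modulo $\theta_{t-1}$; moreover $\theta_{t-1}\equiv1\pmod p$, hence $\gcd(p,\theta_{t-1})=1$ and multiplication by $p$ permutes $\mathbb{Z}_{\theta_{t-1}}$. Combining these remarks gives $pD=D$, so $p$ is a multiplier (with zero shift), and since the set of multipliers is closed under multiplication, so is every power of $p$.

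For the converse let $m$ be a multiplier, $mD=D+g$; since $|mD|=|D|$ one has $\gcd(m,\theta_{t-1})=1$, so $m$ is a unit modulo $\theta_{t-1}$. I would then pass to the projective picture of Theorem~\ref{Singer}: the map $\tau_{m}\colon\la \alpha^{i}\ra_q\mapsto\la \alpha^{mi}\ra_q$ is a well-defined bijection of the points of $\PG(t-1,q)$ (it depends only on $i$ modulo $\theta_{t-1}$), it fixes $\la 1\ra_q$, and it carries each hyperplane $\cH_{D+j}$ onto $\cH_{D+g+mj}$, again a hyperplane; using $\gcd(m,\theta_{t-1})=1$ one checks that it preserves point--hyperplane incidence, so (as $t\geq3$) $\tau_{m}$ is a collineation of $\PG(t-1,q)$. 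Writing $\sigma\colon\la x\ra_q\mapsto\la \alpha x\ra_q$ for the Singer cycle one verifies $\tau_{m}\sigma\tau_{m}^{-1}=\sigma^{m}$, so $\tau_{m}$ lies in the normaliser $N$ of $\la\sigma\ra$ in $\PGaL(t,q)$. As $\la\sigma\ra$ is regular on the $\theta_{t-1}$ points, the stabiliser of $\la 1\ra_q$ in $N$ has order $|N|/\theta_{t-1}$; by the classical description of the normaliser of a Singer cycle this stabiliser is exactly the cyclic group generated by the Frobenius collineation $\tau_{p}\colon\la x\ra_q\mapsto\la x^{p}\ra_q$, which has order $te$, the multiplicative order of $p$ modulo $\theta_{t-1}$. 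Hence $\tau_{m}\in\la\tau_{p}\ra$, say $\tau_{m}=\tau_{p^{k}}$, and therefore $m\equiv p^{k}\pmod{\theta_{t-1}}$.

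The one genuinely non-elementary input — and the step I expect to be the main obstacle — is the structure of $N=N_{\PGaL(t,q)}(\la\sigma\ra)$, namely that $N=\la\sigma\ra\rtimes\la\tau_{p}\ra$ has order $\theta_{t-1}\cdot te$; I would invoke this as known, and indeed it is precisely what Corollary~1.3.4 and Proposition~3.1.1 of \cite{Pott} supply when combined with the Frobenius computation of the first paragraph. A self-contained alternative for the converse is character-theoretic: every nontrivial character $\chi$ of $\mathbb{Z}_{\theta_{t-1}}$ evaluated on $D$ is, up to a root of unity, a Gauss sum over $\Fqt$; a multiplier $m$ replaces these values by the ones for $\chi\mapsto\chi^{m}$, and the Galois action on Gauss sums (Davenport--Hasse and Stickelberger) pins $m$ down to a power of $p$. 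This trades the Singer-cycle fact for the arithmetic of Gauss sums.
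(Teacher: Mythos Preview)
The paper does not give its own proof of this statement; it is quoted verbatim from Pott \cite[Corollary 1.3.4 and Proposition 3.1.1]{Pott} and used as a black box, so there is no ``paper's proof'' to compare against.

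That said, your sketch is a faithful reconstruction of the standard argument and is correct. The forward direction is exactly the elementary computation one expects: the Frobenius commutes with the trace, and $\gcd(p,\theta_{t-1})=1$, so $pD=D$ and hence every $p$-power is a multiplier (with zero shift). For the converse you correctly turn a multiplier $m$ into a hyperplane-preserving bijection $\tau_m$ of the point set of $\PG(t-1,q)$; since $t\ge3$ this forces $\tau_m$ to be a collineation (three points are collinear iff every hyperplane through two of them contains the third), and the relation $\tau_m\sigma\tau_m^{-1}=\sigma^{m}$ places $\tau_m$ in the normaliser of the Singer cycle. The identification of the point-stabiliser in that normaliser with $\langle\tau_p\rangle$ is, as you say, the one non-elementary ingredient, and it is precisely what the cited results in \cite{Pott} encode; once granted, $\tau_m=\tau_{p^{k}}$ and hence $m\equiv p^{k}\pmod{\theta_{t-1}}$ follow. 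Your alternative via Gauss sums is also a legitimate and well-known route (this is essentially how one proves the multiplier theorem for Singer difference sets character-theoretically), though it replaces one classical fact by another of comparable depth.

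In short: nothing to correct, and nothing to compare --- your argument is the standard one behind the citation.
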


For a subset $\cX \subseteq \Sigma$ and an integer $m$, let $\cX^m= \{\la x^m \ra_q \colon \la x \ra_q \in \cX\}$.



\begin{corollary}
\label{C2}
Let $t\geq 3$ and $m$ be two integers. The $\Sigma \rightarrow \Sigma$ map, $\la x \ra_q \mapsto \la x^m \ra_q$ is a collineation if and only if $m\equiv p^h \pmod {\theta_{t-1}}$, for some $h\in \mathbb{N}$. 
\end{corollary}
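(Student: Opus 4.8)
The plan is to reduce the ``collineation'' statement to a purely combinatorial statement about the Singer difference set, and then invoke Theorems~\ref{Singer} and~\ref{ppower}. The key observation is that the map $\la x\ra_q\mapsto\la x^m\ra_q$ is well defined on $\Sigma=\PG_q(\F_{q^t})$ precisely when $\gcd(m,\theta_{t-1})=1$: if a common factor exists, two distinct $\F_q$-lines $\la x\ra_q$, $\la y\ra_q$ with $x/y$ of suitable multiplicative order get identified, exactly as in the difference-set discussion preceding the statement (with $v=\theta_{t-1}$, $j=m$). So I may assume $\gcd(m,\theta_{t-1})=1$ throughout; under that assumption the map is a bijection of $\Sigma$ onto itself.

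Next I would translate ``collineation'' into ``preserves hyperplanes''. Fix a generator $\alpha$ of $\F_{q^t}^*$ and use Theorem~\ref{Singer}: the hyperplanes of $\PG(t-1,q)$ are exactly the sets $\cH_{D+j}=\{\la\alpha^i\ra_q\colon i\in D+j\}$ for $0\le j<\theta_{t-1}$, where $D$ is the classical Singer difference set. Under the bijection $\la\alpha^i\ra_q\mapsto\la\alpha^{mi}\ra_q$, the image of $\cH_{D+j}$ is $\{\la\alpha^i\ra_q\colon i\in m(D+j)\}=\cH_{mD+mj}$ in the obvious indexing by residues mod $\theta_{t-1}$. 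Hence the map sends every hyperplane to a hyperplane if and only if, for every $j$, the set $m(D+j)=mD+mj$ is a translate of $D$; since translating by $mj$ is harmless, this holds for all $j$ iff $mD$ itself is a translate of $D$, i.e.\ iff $m$ is a numerical multiplier of $D$. By the fundamental theorem of projective geometry, a bijection of the points of $\PG(t-1,q)$ (with $t-1\ge2$, i.e.\ $t\ge3$) that maps every hyperplane onto a hyperplane is a collineation; conversely a collineation permutes the hyperplanes. So the map is a collineation iff $m$ is a multiplier of $D$. Finally, Theorem~\ref{ppower} identifies the multipliers of a classical Singer difference set as exactly the powers of $p$ modulo $\theta_{t-1}$, which gives the stated equivalence $m\equiv p^h\pmod{\theta_{t-1}}$.

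A few points need care. First, one should note that every power $p^h$ is automatically coprime to $\theta_{t-1}=(q^t-1)/(q-1)$, since $\theta_{t-1}\mid q^t-1$ and $p\nmid q^t-1$; thus the ``multiplier $\Rightarrow$ coprime'' step is consistent and no case is lost. Second, in the forward direction one must rule out the non-coprime case cleanly: if $\gcd(m,\theta_{t-1})>1$ the map fails even to be injective on points, hence cannot be a collineation, which is why restricting to $\gcd(m,\theta_{t-1})=1$ loses nothing. Third, the identification ``image of $\cH_{D+j}$ under $x\mapsto x^m$ equals $\cH_{mD+mj}$'' uses that $i\mapsto mi$ is a bijection of $\Zp_{\theta_{t-1}}$ under the coprimality hypothesis, so the image really has the full size $\theta_{t-2}$ of a hyperplane and is not a proper subset.

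The main obstacle is the ``maps hyperplanes to hyperplanes $\Rightarrow$ is a collineation'' implication: one wants to apply the fundamental theorem of projective geometry, but that theorem is usually stated for maps known to be collineations or for bijections preserving collinearity. The clean route is to observe that a bijection of points together with its action sending each hyperplane to a hyperplane induces a bijection on hyperplanes preserving incidence (a point lies on a hyperplane iff its image lies on the image hyperplane, by bijectivity), i.e.\ an automorphism of the incidence geometry, which by the fundamental theorem is induced by a semilinear map. Alternatively, and perhaps more self-containedly, one can dualize: since all hyperplanes arise as translates $\cH_{D+j}$ and the map permutes them, it permutes the points (= intersections of hyperplanes) compatibly, and an easy counting/intersection argument shows collinearity is preserved. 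Either way this is the step where the argument must be written out rather than waved through; the difference-set bookkeeping before and after it is routine.
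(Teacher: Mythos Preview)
Your approach is essentially the same as the paper's: translate the problem via Theorem~\ref{Singer} into whether $m$ is a multiplier of the Singer difference set, then invoke Theorem~\ref{ppower}. The paper is much more terse and simply asserts ``the map is a collineation if and only if it maps hyperplanes into hyperplanes'' without the careful discussion of bijectivity and the fundamental theorem that you provide, but the skeleton is identical.

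One terminological slip: the map $\la x\ra_q\mapsto\la x^m\ra_q$ is always \emph{well defined}, since for $\lambda\in\F_q^*$ we have $\lambda^m\in\F_q^*$ and hence $\la(\lambda x)^m\ra_q=\la x^m\ra_q$. What fails when $\gcd(m,\theta_{t-1})>1$ is \emph{injectivity}, not well-definedness; your own description (``two distinct $\F_q$-lines \ldots\ get identified'') confirms you mean exactly this. The mathematics is unaffected.
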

\begin{proof}
The map $\la x \ra_q \mapsto \la x^m \ra_q$ is a collineation if and only if it maps hyperplanes into hyperplanes. Let $\alpha$, $D$ and $\cH_{D+j}$ for $0\leq j <\theta_{t-1}$ be defined as in Theorem \ref{Singer}. For any hyperplane $\cH_{D+j}$ of $\Sigma$ the point set $\cH_{D+j}^m=\{ \la \alpha^{mi} \ra_q \colon i\in D+j \}$ is a hyperplane if and only if $\{mi \colon i\in D+j \}=mD+mj$ is a translate of $D$, i.e when $mD+mj=D+k$ for some $k$ and hence $mD=D+g$ (with $g=k-mj$), which is equivalent to saying that $m$ is a multiplier of $D$. The assertion follows from Theorem \ref{ppower}. 
\qed
\end{proof}

\begin{corollary}
\label{corollar}
Let $t\geq 3$. For $0<s<t$ the map $\Theta_s$ is not a collineation. 
\end{corollary}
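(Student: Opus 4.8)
The plan is to apply Corollary~\ref{C2} with $m=\theta_s$: the map $\Theta_s$ is a collineation of $\PG(t-1,q)$ if and only if $\theta_s\equiv p^h\pmod{\theta_{t-1}}$ for some $h\in\mathbb N$. So it suffices to show that this congruence has no solution when $0<s<t$.

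The case $s=t-1$ is immediate: then $\theta_s=\theta_{t-1}\equiv 0\pmod{\theta_{t-1}}$, while $\theta_{t-1}$ divides $q^t-1$ and hence is coprime to $p$, so no power of $p$ is $\equiv 0\pmod{\theta_{t-1}}$. Assume from now on $0<s<t-1$ and, for a contradiction, that $\theta_s\equiv p^h\pmod{\theta_{t-1}}$. I would clear denominators by multiplying through by $q-1$: since $(q-1)\theta_s=q^{s+1}-1$ and $(q-1)\theta_{t-1}=q^t-1$, and since multiplying a congruence by a nonzero integer $c$ replaces the modulus $n$ by $cn$ in a reversible way, the assumption is equivalent to
\[
q^{s+1}-1\equiv(q-1)\,p^h\pmod{q^t-1}.
\]

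Now I would compare base-$p$ representations. Writing $q^t-1=p^{et}-1$ and using $p^{et}\equiv 1\pmod{q^t-1}$, reduction modulo $q^t-1$ acts on the length-$et$ base-$p$ digit string of a number as a cyclic rotation; since $(q-1)p^h=(p^e-1)p^h$ and $p^e-1=\sum_{i=0}^{e-1}(p-1)p^i$ has exactly $e$ nonzero digits (each equal to $p-1$), and $e<et$, the representative of $(q-1)p^h$ in $\{0,1,\dots,q^t-2\}$ has exactly $e$ nonzero base-$p$ digits. On the other hand $q^{s+1}-1=p^{(s+1)e}-1=\sum_{i=0}^{(s+1)e-1}(p-1)p^i$ lies in $\{0,1,\dots,q^t-2\}$ (because $s+1\le t-1$) and has exactly $(s+1)e$ nonzero base-$p$ digits. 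Two elements of $\{0,1,\dots,q^t-2\}$ congruent modulo $q^t-1$ must be equal, so $(s+1)e=e$, forcing $s=0$: a contradiction. This proves the corollary.

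The step I expect to require the most care is the digit-counting observation: one must justify cleanly that, independently of $h$, the residue of $(q-1)p^h$ modulo $q^t-1$ always has exactly $e$ nonzero base-$p$ digits — i.e. that multiplying the digit block of $p^e-1$ by $p^h$ and reducing modulo $p^{et}-1$ is a genuine cyclic rotation that cannot make the block of $(p-1)$'s overlap itself (which is exactly where $e<et$, equivalently $t\ge 2$, is used) — and to make sure the passage to the modulus $q^t-1$ is an equivalence, not just an implication, so that ruling out the congruence modulo $q^t-1$ really rules it out modulo $\theta_{t-1}$. Everything else is routine.
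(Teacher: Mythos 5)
Your proof is correct. The first step is identical to the paper's: both invoke Corollary~\ref{C2} to reduce the claim to showing that $\theta_s\not\equiv p^h\pmod{\theta_{t-1}}$ for $0<s<t$. Where you diverge is in how that congruence is ruled out. The paper first normalizes $h$ so that $p^h<q^t$ (using $q^t\equiv1\pmod{\theta_{t-1}}$) and then runs an inequality-based case analysis on the size of $p^h$ relative to $\theta_{t-1}$ and $q^t$, the only nontrivial case being $\theta_{t-1}<p^h<q^t$, which it kills by showing $1<\theta_s q^t/p^h<\theta_{t-1}$ while this quantity must be $\equiv1\pmod{\theta_{t-1}}$. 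You instead clear denominators to work modulo $q^t-1=p^{et}-1$ and compare the number of nonzero base-$p$ digits of the canonical representatives: $(q-1)p^h$ reduces to a cyclic shift of the digit block of $p^e-1$, hence has exactly $e$ nonzero digits, while $q^{s+1}-1$ has $(s+1)e$ of them. Your argument is uniform in $h$ (no case split on the size of $p^h$) and is essentially the standard invariance of $p$-adic weight under multiplication by powers of $p$ modulo $p^N-1$; the price is the care you correctly flag, namely checking that the shifted block of length $e$ cannot wrap onto itself inside a window of length $et$ (true since $t\geq 2$) and that the passage from modulus $\theta_{t-1}$ to modulus $q^t-1$ is an equivalence, which it is because $a\equiv b\pmod n$ iff $ca\equiv cb\pmod{cn}$. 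Both treatments of the extremal case $s=t-1$ (where $\theta_s\equiv0$ and no power of $p$ can vanish modulo $\theta_{t-1}$) agree. In short: same reduction, genuinely different and equally valid number-theoretic core.
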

\begin{proof}
According to Corollary \ref{C2}, it is enough to show that for any $0<s<t$ there is no integer $h$ such that $\theta_s \equiv p^h \pmod {\theta_{t-1}}$. Note that $q^t=(q-1)\theta_{t-1}+1$, thus $p^0=1\equiv q^t \pmod {\theta_{t-1}}$. 
It is therefore enough to show the assertion when $p^h < q^t$. 
Suppose, contrary to our claim, that $\theta_s \equiv p^h \pmod {\theta_{t-1}}$ for some $p^h<q^t$ and $0<s<t$. 
Since $p^h \not\equiv 0 \pmod {\theta_{t-1}}$, we have $s\leq t-2$ and 
\begin{equation}
\label{eq_p}
\theta_s q^t/p^h \equiv q^t\equiv 1 \pmod {\theta_{t-1}}. 
\end{equation}
It is easy to see that $p^h\neq 1$. 
If $1<p^h<\theta_{t-1}$, then $p^h=\theta_s$, which cannot be as $p$ does not divide $\theta_s$.  
So we may assume $\theta_{t-1} < p^h < q^t$. 
In this case $p^h < q^t(p^h-\theta_{t-1})$, thus $q^t \theta_{t-1}<p^h(q^t-1)$ and hence $q^t/p^h < (q^t-1)/\theta_{t-1}=q-1$.
On the other hand we have $1 < q^t/p^h$ and hence 
\[1< \theta_s q^t/p^h < \theta_{t-2} (q-1) = q^{t-1}-1 < \theta_{t-1},\]
contrary to \eqref{eq_p}. 
\qed
\end{proof}

\begin{proposition}
\label{negative}
For $0 \leq s<t$, the map $\la x \ra_q \mapsto \la x^{-\theta_s} \ra_q$ is the composition of $\Theta_{t-s-2}$ and a projectivity of $\Sigma$.
\end{proposition}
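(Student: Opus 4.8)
The plan is to reduce the claim to a single congruence modulo $\theta_{t-1}$ together with the elementary fact that a power of the Frobenius automorphism induces a projectivity of $\Sigma$. The first step is to record that the exponent of the base point may be read modulo $\theta_{t-1}$: for every $x\in\F_{q^t}^*$ one has $(x^{\theta_{t-1}})^{q-1}=x^{q^t-1}=1$, so $x^{\theta_{t-1}}$ lies in the unique subgroup of order $q-1$ of the cyclic group $\F_{q^t}^*$, namely $\F_q^*$. Hence $\la x^m\ra_q=\la x^{m'}\ra_q$ whenever $m\equiv m'\pmod{\theta_{t-1}}$, and in particular the map $\la x\ra_q\mapsto\la x^m\ra_q$ of $\Sigma$ depends only on the residue of $m$ modulo $\theta_{t-1}$.

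Next I would compute
\[
\theta_{t-1}-\theta_s=\frac{q^t-q^{s+1}}{q-1}=q^{s+1}\,\frac{q^{t-s-1}-1}{q-1}=q^{s+1}\theta_{t-s-2},
\]
so that $-\theta_s\equiv q^{s+1}\theta_{t-s-2}\pmod{\theta_{t-1}}$. Combining this with the first step, for every $x\in\F_{q^t}^*$ one gets
\[
\la x^{-\theta_s}\ra_q=\la x^{q^{s+1}\theta_{t-s-2}}\ra_q=\la\bigl(x^{q^{s+1}}\bigr)^{\theta_{t-s-2}}\ra_q=\Theta_{t-s-2}\bigl(\la x^{q^{s+1}}\ra_q\bigr).
\]

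Finally, the map $\rho\colon\F_{q^t}\to\F_{q^t}$, $x\mapsto x^{q^{s+1}}$, is additive and satisfies $\rho(ax)=a\rho(x)$ for all $a\in\F_q$ (since $a^{q^{s+1}}=a$), hence it is an $\F_q$-linear bijection of $\F_{q^t}$ regarded as an $\F_q$-space, and therefore induces a projectivity of $\Sigma=\PG_q(\F_{q^t})$, explicitly $\la x\ra_q\mapsto\la x^{q^{s+1}}\ra_q$. The displayed identity then says exactly that $\la x\ra_q\mapsto\la x^{-\theta_s}\ra_q$ is the composition of this projectivity with $\Theta_{t-s-2}$ (the two maps commute, as integer exponents do, so the order is immaterial), which is the assertion; in the boundary case $s=t-1$ one has $t-s-2=-1$, $\theta_{-1}=0$, and both sides collapse to the constant map $\la x\ra_q\mapsto\la 1\ra_q$, so the statement remains formally valid. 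There is no serious obstacle here: the only two points that genuinely need justification are that exponents must be reduced modulo $\theta_{t-1}$ and not modulo $q^t-1$ (this is what turns $-\theta_s$ into a bona fide $\Theta$-type exponent $q^{s+1}\theta_{t-s-2}$), and that $x\mapsto x^{q^{s+1}}$ is $\F_q$-\emph{linear}, so that it is a projectivity of $\Sigma$ rather than merely a collineation; both are immediate once stated.
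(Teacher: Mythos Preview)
Your proof is correct and follows exactly the same route as the paper: use $x^{\theta_{t-1}}\in\F_q^*$ to replace the exponent $-\theta_s$ by $\theta_{t-1}-\theta_s=q^{s+1}\theta_{t-s-2}$, and then observe that $\la x\ra_q\mapsto\la x^{q^{s+1}}\ra_q$ is a projectivity of $\Sigma$. The paper's version is just terser; your added remarks on reducing exponents modulo $\theta_{t-1}$, on the $\F_q$-linearity of the Frobenius power, and on the degenerate case $s=t-1$ are all correct elaborations of the same argument.
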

\begin{proof}
For each $x\in \F_{q^t}^*$ we have $x^{\theta_{t-1}}\in\F_{q}^*$, thus
\[\la x^{-\theta_s}\ra_q=\la x^{\theta_{t-1}-\theta_s}\ra_q=\Theta_{t-s-2}\la x^{q^{s+1}} \ra_q.\]
As $\la x \ra_q \mapsto \la x^{q^h} \ra_q$ is a projectivity for each $h$, the assertion follows.  
\qed
\end{proof}

\section{Linear sets obtained via non-equivalent projections}
\label{linearset}

\begin{definition}
\label{Capital_Phi}
For an integer $h$ let $\Phi_h$ denote the collineation of $\Sigma^*=\PG(t-1,q^t)$, defined as 
\[\la(\alpha_0,\alpha_1,\ldots, \alpha_{t-1})\ra_{q^t}^{\Phi_h}=\la(\alpha_0^{p^h},\alpha_1^{p^h},\ldots,\alpha_{t-1}^{p^h})\ra_{q^t}.\] 
\end{definition}

\begin{proposition}
\label{Phi_h}
For each integer $h$ and permutation $\pi$ of $N_{t-1}$, $\Phi_h$ fixes $\Sigma_{\pi}$. 
\end{proposition}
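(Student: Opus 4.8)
The plan is to verify directly that $\Phi_h$ maps the generating point $\la(\alpha^{q^{\pi(0)}},\ldots,\alpha^{q^{\pi(t-1)}})\ra_{q^t}$ of $\Sigma_\pi$ to another point of the same form, which suffices since $\Phi_h$ is a bijection of $\Sigma^*$ onto itself (indeed an involution when $p^h$ runs over a complete residue system, but all we need is that it is a collineation, hence injective, so that the image of $\Sigma_\pi$ is exactly $\Sigma_\pi$ once we check containment). First I would compute, for a fixed $\alpha\in\F_{q^t}^*$,
\[
\la(\alpha^{q^{\pi(0)}},\alpha^{q^{\pi(1)}},\ldots,\alpha^{q^{\pi(t-1)}})\ra_{q^t}^{\Phi_h}
=\la\bigl((\alpha^{q^{\pi(0)}})^{p^h},(\alpha^{q^{\pi(1)}})^{p^h},\ldots,(\alpha^{q^{\pi(t-1)}})^{p^h}\bigr)\ra_{q^t}.
\]

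The key step is the elementary observation that the Frobenius-type maps commute: since $q=p^e$, the map $x\mapsto x^q$ on $\F_{q^t}$ is the $e$-th power of the prime Frobenius $x\mapsto x^p$, so $(x^{q^j})^{p^h}=(x^{p^h})^{q^j}$ for every $j$ and every $x\in\F_{q^t}$. Applying this coordinatewise with $x=\alpha$ and $j=\pi(i)$, the image point becomes
\[
\la\bigl(\beta^{q^{\pi(0)}},\beta^{q^{\pi(1)}},\ldots,\beta^{q^{\pi(t-1)}}\bigr)\ra_{q^t},
\qquad \beta:=\alpha^{p^h}.
\]
As $\alpha$ ranges over $\F_{q^t}^*$, so does $\beta=\alpha^{p^h}$ (the $p^h$-power map is a bijection of $\F_{q^t}^*$, being a power of an automorphism), whence $\Sigma_\pi^{\Phi_h}\subseteq\Sigma_\pi$; equality follows because $\Phi_h$ is a collineation and hence maps $\Sigma_\pi$ onto a set of the same cardinality.

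There is essentially no obstacle here: the only thing to be careful about is that $\Phi_h$ is genuinely a well-defined collineation of $\PG(t-1,q^t)$ — the companion automorphism is $x\mapsto x^{p^h}$ of $\F_{q^t}$, which is well defined for any integer $h$ (with $\Phi_h=\Phi_{h'}$ when $h\equiv h'\pmod{et}$) — and that raising all homogeneous coordinates to the $p^h$ power is consistent with the projective equivalence relation, i.e.\ $(\mu v)^{p^h}=\mu^{p^h}v^{p^h}$ picks out the same one-dimensional $\F_{q^t}$-subspace; this is immediate. Thus the entire content of the proposition is the commuting-Frobenius identity $(x^{q^j})^{p^h}=(x^{p^h})^{q^j}$ together with the surjectivity of $\alpha\mapsto\alpha^{p^h}$ on $\F_{q^t}^*$.
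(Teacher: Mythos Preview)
Your proof is correct and follows exactly the same approach as the paper: both compute the image under $\Phi_h$ of a generic point of $\Sigma_\pi$ and observe, via the commutation $(\alpha^{q^{\pi(i)}})^{p^h}=(\alpha^{p^h})^{q^{\pi(i)}}$, that it is the point of $\Sigma_\pi$ parametrized by $\beta=\alpha^{p^h}$. The paper's version is simply terser, omitting the remarks on well-definedness and surjectivity that you spell out.
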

\begin{proof}
For each $\alpha\in \F_{q^t}^*$ we have 
\[\left\la\left(\alpha^{q^{\pi(0)}},\alpha^{q^{\pi(1)}},\ldots,\alpha^{q^{\pi(t-1)}}\right)\right\ra_{q^t}^{\Phi_h}=\left\la\left(\beta^{q^{\pi(0)}},\beta^{q^{\pi(1)}},\ldots,\beta^{q^{\pi(t-1)}}\right)\right\ra_{q^t},\]
where $\beta=\alpha^{p^h}$.
\qed
\end{proof}

\begin{proposition}
\label{firstcoord}
Let $\pi$ be a permutation of $N_{t-1}$ and let $\omega$ be the permutation defined such that $\omega(i)\equiv \pi(i)-\pi(0) \pmod t$ for each $i\in N_{t-1}$. Then $\Sigma_{\pi} = \Sigma_{\omega}$.
\end{proposition}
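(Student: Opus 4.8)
The statement to prove is Proposition~\ref{firstcoord}: if $\omega(i)\equiv \pi(i)-\pi(0)\pmod t$, then $\Sigma_\pi=\Sigma_\omega$. The plan is to compare the two sets point by point using the defining parametrisation~\eqref{sigmai}. A generic point of $\Sigma_\pi$ is
\[
P_\alpha=\la(\alpha^{q^{\pi(0)}},\alpha^{q^{\pi(1)}},\ldots,\alpha^{q^{\pi(t-1)}})\ra_{q^t},\qquad \alpha\in\F_{q^t}^*,
\]
and I want to rewrite its coordinate vector as a scalar multiple of the vector $(\beta^{q^{\omega(0)}},\ldots,\beta^{q^{\omega(t-1)}})$ that defines a point of $\Sigma_\omega$, for a suitable $\beta\in\F_{q^t}^*$ depending on $\alpha$.

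\textbf{Key step.} First I would factor out $\alpha^{q^{\pi(0)}}$ from the coordinate vector of $P_\alpha$. Since we work with points of $\PG(t-1,q^t)$, scaling by the nonzero element $\alpha^{q^{\pi(0)}}\in\F_{q^t}^*$ does not change the point, so
\[
P_\alpha=\la(\alpha^{q^{\pi(0)}-q^{\pi(0)}},\alpha^{q^{\pi(1)}-q^{\pi(0)}},\ldots,\alpha^{q^{\pi(t-1)}-q^{\pi(0)}})\ra_{q^t}
=\la(\alpha^{0},\alpha^{q^{\pi(1)}-q^{\pi(0)}},\ldots)\ra_{q^t}.
\]
Next I use that the exponents are taken in $\F_{q^t}$, where $\alpha^{q^t}=\alpha$, so the exponent $q^{\pi(i)}$ only matters modulo $q^t-1$, equivalently $\pi(i)$ matters only modulo $t$ in the exponent position $q^{\pi(i)}$. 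More precisely, $\alpha^{q^{\pi(i)}-q^{\pi(0)}}=\alpha^{q^{\pi(0)}(q^{\pi(i)-\pi(0)}-1)}$ when $\pi(i)\ge\pi(0)$, and in general $q^{\pi(i)}-q^{\pi(0)}\equiv q^{\pi(0)}(q^{\omega(i)}-1)\pmod{q^t-1}$ because $\pi(i)-\pi(0)\equiv\omega(i)\pmod t$ forces $q^{\pi(i)-\pi(0)}\equiv q^{\omega(i)}\pmod{q^t-1}$ (using $q^t\equiv1$). Hence the $i$-th coordinate of $P_\alpha$ equals $\left(\alpha^{q^{\pi(0)}}\right)^{q^{\omega(i)}-1}$, i.e.\ setting $\gamma=\alpha^{q^{\pi(0)}}$ we get
\[
P_\alpha=\la(\gamma^{q^{\omega(0)}-1},\gamma^{q^{\omega(1)}-1},\ldots,\gamma^{q^{\omega(t-1)}-1})\ra_{q^t}
=\la(\gamma^{q^{\omega(0)}},\gamma^{q^{\omega(1)}},\ldots,\gamma^{q^{\omega(t-1)}})\ra_{q^t},
\]
where the last equality is scaling by $\gamma\in\F_{q^t}^*$. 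This exhibits $P_\alpha$ as a point of $\Sigma_\omega$ (with parameter $\gamma=\alpha^{q^{\pi(0)}}$). Finally, since $\alpha\mapsto\alpha^{q^{\pi(0)}}$ is a bijection of $\F_{q^t}^*$, every point of $\Sigma_\omega$ arises this way, giving $\Sigma_\pi=\Sigma_\omega$.

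\textbf{Main obstacle.} There is no real difficulty here; the only thing to be careful about is the arithmetic of exponents: one must consistently reduce exponents of $\alpha$ modulo $q^t-1$ rather than naively writing $q^{\pi(i)-\pi(0)}$ (which is only legitimate after passing to exponents mod $t$ via $q^t\equiv1\pmod{q^t-1}$), and one must verify that $\omega$ is genuinely a permutation of $N_{t-1}$ — which is immediate since subtracting the constant $\pi(0)$ modulo $t$ is a bijection of $\mathbb Z/t\mathbb Z$ and $\pi$ is a permutation. I would also remark that $\omega(0)=0$, so this proposition says we may always normalise the first exponent to be $0$, which is why only the differences $\pi(i)-\pi(0)$, and in particular $\gcd(\pi(1)-\pi(0),t)$, are relevant for the projection producing $\mathbb L$.
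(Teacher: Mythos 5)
Your proof is correct and amounts to the same computation as the paper's: the reparametrisation $\gamma=\alpha^{q^{\pi(0)}}$ you introduce is exactly what the paper obtains by applying the Frobenius collineation $\Phi_h$ with $h=-\pi(0)e$ (which carries $\Sigma_\pi$ onto $\Sigma_\omega$) and then invoking Proposition~\ref{Phi_h} to conclude that this same collineation fixes $\Sigma_\pi$. The only difference is presentational — you inline the exponent arithmetic modulo $q^t-1$ instead of packaging it in the already-established map $\Phi_h$ — so no further comment is needed.
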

\begin{proof}
Let $h=-\pi(0)e$, where $q=p^e$, $p$ prime. It is easy to see that $\Phi_h(\Sigma_{\pi})=\Sigma_{\omega}$. As $\Phi_h$ fixes $\Sigma_{\pi}$ the assertion follows.
\qed
\end{proof}

\begin{lemma}
\label{main2}
Let $\Gamma\cong \PG(t-3,q^t)$ be the $(t-3)$-space $X_1=X_2=0$ in $\Sigma^*=\PG(t-1,q^t)$, $t\geq 3$. If $\phi$ is a projectivity of $\Sigma^*$
and $\pi$ is a permutation of $N_{t-1}$ 
such that $\Sigma_{id}^{\phi}=\Sigma_{\pi}$ and $\Gamma^{\phi}=\Gamma$, then 
$\pi(1)-\pi(0)\equiv \pm 1 \pmod t$, or $\gcd(\pi(1)-\pi(0),t)>1$.
\end{lemma}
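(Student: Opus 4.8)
The goal is to constrain the permutation $\pi$ given that a projectivity $\phi$ carries $\Sigma_{id}$ to $\Sigma_\pi$ while fixing $\Gamma$. The natural object to work with is the collineation $\widetilde{\phi} = \phi_\pi \phi^{-1} \phi_{id}^{-1}$ of $\Sigma \cong \PG(t-1,q)$ defined in \eqref{e:phi-tilde}. First I would unwind what $\widetilde{\phi}$ does. Since $\phi_{id}$ sends $\la x \ra_q$ to $\la (x, x^q, \dots, x^{q^{t-1}}) \ra_{q^t}$ and $\phi_\pi$ sends $\la \alpha \ra_q$ to $\la (\alpha^{q^{\pi(0)}}, \dots, \alpha^{q^{\pi(t-1)}}) \ra_{q^t}$, and $\phi$ is a projectivity (hence represented by an invertible $\F_{q^t}$-linear map $A$), the composition $\widetilde{\phi}$ is a well-defined collineation $\Sigma \to \Sigma$; because $\phi_{id}, \phi_\pi$ are semilinear with the identity companion automorphism and $\phi$ is $\F_{q^t}$-linear, $\widetilde{\phi}$ will have a companion automorphism that is a power of the Frobenius, say $x \mapsto x^{p^h}$, and by composing with $\Phi_{-h}$ (Proposition \ref{Phi_h} guarantees this does not disturb the $\Sigma_\pi$'s) I may assume $\widetilde{\phi}$ is $\F_q$-linear, i.e. a projectivity of $\Sigma$ — or rather, I isolate the "field automorphism part'' and push it onto a $\Theta_s$-type map.

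\textbf{Key steps.} The crux is the hypothesis $\Gamma^\phi = \Gamma$, i.e. $\phi$ fixes the subspace $X_1 = X_2 = 0$. Equivalently, $\phi$ respects the quotient by $\Gamma$, so it induces a well-defined projectivity on the line $\Lambda$ (the $X_1,X_2$-plane mod lower coordinates). Under this quotient, $\Sigma_{id}$ projects to $\mathbb{L} = \{\la (\lambda, \lambda^q) \ra_{q^t}\}$ (this uses $\gcd(\pi(1)-\pi(0),t)=1$ for $\Sigma_\pi$, and trivially $\gcd(1,t)=1$ for $\Sigma_{id}$), and $\Sigma_\pi$ projects either to $\mathbb{L}$ again when $\gcd(\pi(1)-\pi(0),t)=1$, or to something I need not analyze when that gcd exceeds $1$ — the latter is one of the allowed conclusions. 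So assume $d:=\pi(1)-\pi(0)$ satisfies $\gcd(d,t)=1$; by Proposition \ref{firstcoord} I may further assume $\pi(0)=0$, so $\pi(1)=d$. Now the induced map on $\mathbb{L}$ is a projectivity of $\PG(1,q^t)$ fixing $\mathbb{L}$ setwise, and on the "coordinate'' $\lambda$ it acts (after composing with the $\widetilde{\phi}$-correction) essentially like $\lambda \mapsto \lambda^{\theta_s}$ up to a projectivity of $\Sigma$, where $s$ records the relationship between the exponent pattern $(0,1,2,\dots)$ of $\Sigma_{id}$ and $(0,d,\pi(2),\dots)$ of $\Sigma_\pi$ restricted to the first two coordinates — concretely, comparing $\la(x,x^q)\ra$ with $\la(\alpha, \alpha^{q^d})\ra$ forces $\alpha = x^{q^j}$ for some $j$ and then $x^q \sim (x^{q^j})^{q^d}$, giving $q \equiv q^{j+d}$ or a $\theta$-type exponent after accounting for the action on the whole subgeometry. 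The precise bookkeeping produces $\widetilde{\phi} = \Theta_s \Omega$ with $\Omega$ a projectivity and $0 \le s < t$, OR the "$-\theta_s$'' variant handled by Proposition \ref{negative} which again gives a $\Theta_{t-s-2}$. Then Corollary \ref{corollar} forces $s \equiv 0$ (so $\Theta_s$ is the identity), and tracing back what $s=0$ means for the exponents gives $d \equiv \pm 1 \pmod t$.

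\textbf{Main obstacle.} The delicate part is the exact identification of $\widetilde{\phi}$ with a $\Theta_s$ (or $\pm\theta_s$) map composed with a projectivity, and pinning down $s$ in terms of $\pi(1)-\pi(0)$. One must carefully track how the $\F_{q^t}$-linear map $A$ representing $\phi$, when sandwiched between the two semilinear parametrizations $\phi_{id}$ and $\phi_\pi$, produces a map on $\F_{q^t}^*$ of the form $x \mapsto c\, x^{e}$ with the exponent $e$ a sum/difference of powers of $q$ reducible mod $\theta_{t-1}$ to $\pm\theta_s$; the constraint $\Gamma^\phi=\Gamma$ is what kills the "other components'' of $A$ and forces this clean monomial shape. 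Getting the sign and the value of $s$ right — and confirming that $s \equiv 0$ translates back to exactly $\pi(1)-\pi(0) \equiv \pm 1$, with the two signs corresponding to the identity exponent pattern and its "reversal'' $j \mapsto -j$ — is where the real work lies; everything else is assembling Propositions \ref{Phi_h}, \ref{firstcoord}, \ref{negative} and Corollary \ref{corollar}.
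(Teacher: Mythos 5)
Your proposal has the right skeleton — reduce via Proposition \ref{firstcoord} to $\pi(0)=0$, show that $\widetilde{\phi}=\phi_{\pi}\phi^{-1}\phi_{id}^{-1}$ is, up to a projectivity of $\Sigma$, a map $\la x\ra_q\mapsto\la x^{\pm\theta_{\mu-1}}\ra_q$, and then invoke Proposition \ref{negative} and Corollary \ref{corollar} to force $\mu\equiv\pm1\pmod t$. But the central technical step is missing, and you have misattributed where it comes from. Writing $M$ for a matrix of $\phi$, the hypothesis $\Gamma^{\phi}=\Gamma$ only gives $m_{ij}=0$ for $i=1,2$ and $j\geq 3$; it does \emph{not} ``kill the other components'' of the $2\times2$ block $A=(m_{ij})_{1\le i,j\le 2}$. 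Your assertion that comparing $\la(x,x^q)\ra_{q^t}$ with $\la(\alpha,\alpha^{q^d})\ra_{q^t}$ ``forces $\alpha=x^{q^j}$'' presupposes exactly what has to be proved, namely that $A$ is diagonal or antidiagonal. In the paper this is the heart of the argument: from $\Sigma_{id}^{\phi}=\Sigma_{\pi}$ one gets $\delta_\alpha t_\alpha=m_{11}\alpha+m_{12}\alpha^q$ and $\delta_\alpha t_\alpha^{q^\mu}=m_{21}\alpha+m_{22}\alpha^q$, applies the norm $N$ to eliminate $\delta_\alpha,t_\alpha$, and obtains a polynomial $P(z)=N(m_{11}+m_{12}z)-N(m_{21}+m_{22}z)$ vanishing on all $(q-1)$-th powers, hence equal to $a(z^{\theta_{t-1}}-1)$; comparing the coefficients of $z^q$, $z^{q+1}$ and $z^{\theta_{t-1}-q}$ with the nonsingularity of $A$ yields $m_{11}m_{21}=0=m_{12}m_{22}$. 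Without this (or an explicit appeal to a known description of the $\mathrm{PGL}(2,q^t)$-stabilizer of $\mathbb{L}$, which you gesture at but never invoke), the ``clean monomial shape'' is unsupported, and the subsequent identification of the exponent $\pm\theta_{\mu-1}$ cannot be carried out.

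A secondary, harmless confusion: you propose to strip a Frobenius companion automorphism off $\widetilde{\phi}$ by composing with $\Phi_{-h}$, but the lemma already assumes $\phi$ is a projectivity; the reduction from a general collineation to a projectivity is performed later, in Theorem \ref{nocollin}, precisely by factoring out $\Phi_h$. Also note that even after $A$ is known to be monomial, some bookkeeping remains (solving $t_\alpha^{q^\mu-1}=m_{22}\alpha^{q-1}$, extracting $\rho$ with $\rho^{q-1}=m_{22}$, and checking that $\Sigma_\pi$ contains a unique point of the form $\la(1,x^{q^\mu-1},*)\ra_{q^t}$, which uses $\gcd(\mu,t)=1$); your sketch of this part ($q\equiv q^{j+d}$) does not produce the exponent $\theta_{\mu-1}$ that actually arises.
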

\begin{proof}
Suppose $\gcd(\pi(1)-\pi(0),t)=1$.
Let $M=(m_{ij})_{1\leq i,j\leq t}\in \GL(t,q^t)$ be a matrix associated with $\phi$. As $\phi$ fixes $\Gamma$, we have $m_{ij}=0$ when $i=1,2$ and $3 \leq j \leq t$.  
Denote the $2\times 2$ matrix $(m_{ij})_{1\leq i,j\leq 2}$ by $A$. 
As $M$ is non-singular, the same holds for $A$. 
According to Proposition \ref{firstcoord} we may assume $\pi(0)=0$.
Let $\mu=\pi(1)$, whence $\gcd(\mu,t)=1$.
As $\Sigma_{id}^{\phi}=\Sigma_{\pi}$, for each $\alpha\in \F_{q^t}^*$ there exist 
$\delta_{\alpha}, t_{\alpha} \in \F_{q^t}^*$ such that 
\begin{equation}
\label{eq-1}
\delta_{\alpha}t_{\alpha}=m_{11}\alpha + m_{12}\alpha^q,
\end{equation}
\begin{equation}
\label{eq-2}
\delta_{\alpha}t_{\alpha}^{q^{\mu}}=m_{21}\alpha + m_{22}\alpha^q.
\end{equation}
Let $N$ denote the norm function from $\F_{q^t}$ to $\F_q$, that is $N(x)=x^{\theta_{t-1}}$. As $N(\delta_{\alpha}t_{\alpha}^{q^{\mu}})=N(\delta_{\alpha}t_{\alpha})N(t_{\alpha}^{q^{\mu}-1})=N(\delta_{\alpha}t_{\alpha})$, it follows that
\[N(m_{11}\alpha + m_{12}\alpha^q)=N(m_{21}\alpha + m_{22}\alpha^q),\]
\[N(m_{11} + m_{12}\alpha^{q-1})=N(m_{21} + m_{22}\alpha^{q-1}),\]
and hence for each $(q-1)$-th power $z$ of $\F_{q^t}^*$ we have
\[P(z):=N(m_{11} + m_{12}z)-N(m_{21} + m_{22}z)=0,\]
where $P(z)$ is a polynomial of degree at most $\theta_{t-1}$. 

We claim $m_{11}m_{21}=0$ and $m_{12}m_{22}=0$.
First suppose $m_{11}m_{21}\neq 0$.
Then the coefficient of $z^q$ in $P(z)$ is 
\begin{equation}
\label{eq1}
m_{12}^q N(m_{11})/m_{11}^q-m_{22}^q N(m_{21})/m_{21}^q.
\end{equation}
The coefficient of $z^{q+1}$ is 
\begin{equation}
\label{eq2}
m_{12}^{q+1}N(m_{11})/m_{11}^{q+1}-m_{22}^{q+1}N(m_{21})/m_{21}^{q+1}.
\end{equation}
As $P(z)$ vanishes for each $z$ with $z^{\theta_{t-1}}=1$, we have 
$P(z)=a(z^{\theta_{t-1}}-1)$ for some $a\in \F_{q^t}$.
It follows that \eqref{eq1} and \eqref{eq2} are zero, thus if $m_{12}m_{22}\neq 0$, then 
\[m_{12}/m_{11}=m_{22}/m_{21},\]
and hence $\det (A)=0$, a contradiction. 
On the other hand if one of $\{m_{12},m_{22}\}$ is zero, then both of them are zero, thus we obtain 
again $\det (A)=0$.
Now suppose $m_{12}m_{22}\neq 0$.
Then the coefficient of $z^{\theta_{t-1}-q}$ in $P(z)$ is 
\begin{equation}
\label{eq3}
m_{11}^q N(m_{12})/m_{12}^q-m_{21}^q N(m_{22})/m_{22}^q.
\end{equation}
As before, it follows that \eqref{eq3} is zero. Together with $m_{11}m_{21}=0$, it means that $m_{11}$ and $m_{21}$ are both zero, and hence $\det (A)=0$, a contradiction.

First we consider the case when $A$ is diagonal, i.e. $m_{12}=m_{21}=0$.
We may assume $m_{11}=1$. Then \eqref{eq-1} and \eqref{eq-2} yield 
\[t_{\alpha}^{q^{\mu}-1}=m_{22}\alpha^{q-1}.\] 
The last equation implies that a $\rho\in \F_{q^n}^*$ exists such that ${\rho}^{q-1}=m_{22}$.
Then for any $x\in \F_{q^t}^*$ the following holds: 
\begin{eqnarray*}
  \left\la\frac{x^{\theta_{\mu-1}}}{\rho}\right\ra_q^{\phi_{id}\phi}&=&
  \left\la\left(\frac{x^{\theta_{\mu-1}}}{\rho},\frac{x^{q\theta_{\mu-1}}}{\rho^q},\ldots,
  \frac{x^{q^{t-1}\theta_{\mu-1}}}{\rho^{q^{t-1}}}\right)\right\ra^{\phi}_{q^t}=\\
  &=&
  \left\la\left(\frac{x^{\theta_{\mu-1}}}{\rho},\frac{x^{q\theta_{\mu-1}}}{\rho},*
  \right)\right\ra_{q^t}=\la(1,x^{q^\mu-1},*)\ra_{q^t}\in\Sigma_\pi;\\
  \la x\ra_q^{\phi_{\pi}}&=&\la(x,x^{q^\mu},*)\ra_{q^t}=\la(1,x^{q^\mu-1},*)\ra_{q^t}\in\Sigma_\pi.
\end{eqnarray*}
Since $\mu$ and $t$ are coprime, 
$\Sigma_\pi$ contains exactly one element of the form $\la(1,x^{q^\mu-1},*)\ra_{q^t}$.
It follows that for any $x\in\F_{q^t}^*$
\[\la x^{\theta_{\mu-1}}/\rho \ra_q^{\phi_{id}\phi}=\la x\ra_q^{\phi_{\pi}},\]
and hence $\phi_{\pi}\phi^{-1}\phi_{id}^{-1} \colon \Sigma \rightarrow \Sigma$ which maps $\la x \ra_q$ to $\la x^{\theta_{\mu-1}}/\rho \ra_q$ is a collineation. Since the map $\la y \ra_q \mapsto \la y/\rho\ra_q$ is a collineation of $\Sigma$, Corollary \ref{corollar} yields that 
$\phi_{\pi}\phi^{-1}\phi_{id}^{-1}$ is a collineation only if $\mu=1$.

Now consider the case when $m_{11}=m_{22}=0$.
We may assume $m_{12}=1$. Then \eqref{eq-1} and \eqref{eq-2} yield 
\[t_{\alpha}^{q^{\mu}-1}=m_{21}\alpha^{1-q}.\] 
This implies that a $\rho\in \F_{q^t}^*$ exists such that $\rho^{q-1}=m_{21}$.
As before, it follows that 
\[\la x^{-\theta_{\mu-1}}\rho \ra_q^{\phi_{id}\phi}=\la x\ra_q^{\phi_{\pi}},\]
for each $x\in \F_{q^t}^*$ and hence $\phi_{\pi}\phi^{-1}\phi_{id}^{-1} \colon \Sigma \rightarrow \Sigma$ which maps $\la x \ra_q$ to $\la x^{-\theta_{\mu-1}}\rho \ra_q$ is a collineation. Proposition \ref{negative} and Corollary \ref{corollar} yield that this happens only if $\mu=t-1$. 
\qed
\end{proof}

\begin{theorem}
\label{nocollin}
Let $\Gamma\cong \PG(t-3,q^t)$ be the $(t-3)$-space $X_1=X_2=0$ in $\Sigma^*=\PG(t-1,q^t)$, $t\geq 3$. 
If $\phi$ is a collineation of $\Sigma^*$ such that $\Sigma_{id}^{\phi}=\Sigma_{\pi}$ and $\Gamma^{\phi}=\Gamma$, then $\pi(1)-\pi(0)\equiv \pm 1 \pmod t$, or $\gcd(\pi(1)-\pi(0),t)>1$.
\end{theorem}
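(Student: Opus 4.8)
The plan is to reduce the statement to Lemma \ref{main2} by factoring off the field-automorphism part of $\phi$. A collineation of $\Sigma^*=\PG(t-1,q^t)$ is a semilinear map, and (since $t\geq 3$, so $\Sigma^*$ has dimension $\geq 2$) its companion automorphism lies in $\Gal(\F_{q^t}/\F_p)$, which is cyclic, generated by the Frobenius $x\mapsto x^p$. Hence there is an integer $h$ for which $\phi$ and $\Phi_h$ (Definition \ref{Capital_Phi}) have the same companion automorphism, so that $\phi':=\Phi_h^{-1}\phi$ (note $\Phi_h^{-1}$ is again of the form $\Phi_{h'}$) has trivial companion automorphism, i.e.\ is a projectivity of $\Sigma^*$, and $\phi=\Phi_h\phi'$ with compositions read left to right.

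Next I would check that $\Phi_h$ leaves invariant both pieces of data occurring in the hypotheses. By Proposition \ref{Phi_h} we have $\Sigma_{id}^{\Phi_h}=\Sigma_{id}$. As for $\Gamma$: it is the subspace $X_1=X_2=0$, whose defining equations have coefficients in the prime field, while $\Phi_h$ acts coordinatewise by $\alpha\mapsto\alpha^{p^h}$; therefore a point whose first two coordinates vanish is sent to a point whose first two coordinates vanish, and since $\Phi_h$ is bijective, $\Gamma^{\Phi_h}=\Gamma$. Combining these with $\Sigma_{id}^{\phi}=\Sigma_{\pi}$ and $\Gamma^{\phi}=\Gamma$ gives
\[
\Sigma_{id}^{\phi'}=(\Sigma_{id}^{\Phi_h})^{\phi'}=\Sigma_{id}^{\Phi_h\phi'}=\Sigma_{id}^{\phi}=\Sigma_{\pi},
\qquad
\Gamma^{\phi'}=(\Gamma^{\Phi_h})^{\phi'}=\Gamma^{\Phi_h\phi'}=\Gamma^{\phi}=\Gamma .
\]
Thus $\phi'$ is a projectivity satisfying exactly the hypotheses of Lemma \ref{main2}, and that lemma yields $\pi(1)-\pi(0)\equiv\pm1\pmod t$, or $\gcd(\pi(1)-\pi(0),t)>1$, which is the assertion.

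I expect the only delicate point to be the factorization $\phi=\Phi_h\phi'$ with $\phi'$ genuinely linear: this rests on the standard structure of $\PGaL(t,q^t)$ (every collineation is semilinear) together with the fact that $\mathrm{Aut}(\F_{q^t})$ over the prime field is generated by $x\mapsto x^p$, so that the companion automorphism of $\phi$ is a power of $x\mapsto x^p$ and hence equals that of a suitable $\Phi_h$; cancelling it by precomposing with $\Phi_h^{-1}$ leaves a projectivity. Everything else is routine bookkeeping with the left-to-right composition convention and the invariance properties of $\Phi_h$ already recorded in the excerpt.
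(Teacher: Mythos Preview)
Your argument is correct and follows exactly the paper's approach: factor $\phi=\Phi_h\Omega$ with $\Omega$ a projectivity via the Fundamental Theorem, use that $\Phi_h$ fixes both $\Sigma_{id}$ and $\Gamma$, and then apply Lemma \ref{main2} to $\Omega$. Your write-up is simply a more detailed version of the paper's three-line proof.
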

\begin{proof}
According to the Fundamental Theorem of Projective Geometry we may assume
$\phi=\Phi_h\Omega$, where $\Omega$ is a projectivity and $\Phi_h$ is as in Definition \ref{Capital_Phi}. As $\Phi_h$ fixes $\Gamma$ and $\Sigma_{id}$, it
follows that $\Omega$ satisfies the conditions of Lemma \ref{main2} and hence the assertion follows. 
\qed
\end{proof}

\begin{corollary}
If $t=5$ or $t> 6$, then any linear set of pseudoregulus type in $\PG(1,q^t)$ can be obtained as the projection of two different subgeometries, $\Sigma_1\cong\Sigma_2\cong \PG(t-1,q)$, from a center $\Gamma\cong \PG(t-3,q^t)$ to an axis $\Lambda\cong \PG(1,q^t)$ in the ambient space $\Sigma^*=\PG(t-1,q^t)$, such that there exists no collineation $\phi$ of $\Sigma^*$ with $\Gamma^{\phi}=\Gamma$ and $\Sigma_1^{\phi}=\Sigma_2$.
\end{corollary}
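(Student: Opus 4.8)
The plan is to exhibit the two subgeometries explicitly as $\Sigma_1=\Sigma_{id}$ and $\Sigma_2=\Sigma_{\pi}$ for a well-chosen permutation $\pi$, to reduce the case of an arbitrary linear set of pseudoregulus type to this one by a projectivity fixing centre and axis, and then to invoke Theorem~\ref{nocollin} to rule out the collineation $\phi$.

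First I would record the elementary arithmetic input behind the hypothesis: since $t=5$ or $t>6$, there exists an integer $\mu$ with $2\le\mu\le t-2$ and $\gcd(\mu,t)=1$ — the only values of $t$ admitting no such $\mu$ are $1,2,3,4,6$, i.e.\ those with $\varphi(t)\le2$ — and in particular $\mu\not\equiv\pm1\pmod t$. I then take $\pi$ to be the permutation of $N_{t-1}$ given by $\pi(i)\equiv\mu i\pmod t$, a bijection because $\gcd(\mu,t)=1$, so that $\pi(0)=0$ and $\pi(1)=\mu$; I set $\Sigma_1=\Sigma_{id}$, $\Sigma_2=\Sigma_{\pi}$, with $\Gamma$ the $(t-3)$-space $X_1=X_2=0$ and $\Lambda$ the line $X_3=\cdots=X_t=0$. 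By the Proposition of Section~\ref{linearset} asserting that $\la\Sigma_{\pi}\ra=\PG(t-1,q^t)$ and that $\phi_{\pi}$ is a collineation, both $\Sigma_1$ and $\Sigma_2$ are canonical subgeometries isomorphic to $\PG(t-1,q)$; a one-line check gives $\Gamma\cap\Sigma_i=\emptyset$ (a point $\la(\alpha^{q^{\pi(0)}},\alpha^{q^{\pi(1)}},\ast)\ra_{q^t}$ of $\Sigma_{\pi}$ meets $X_1=X_2=0$ only if $\alpha=0$), so the projections $p_{\Gamma,\Lambda}(\Sigma_i)$ are defined, and since $\gcd(\pi(1)-\pi(0),t)=1$ for $\pi=id$ and for our $\pi$, the observation recalled just before Lemma~\ref{main2} yields $p_{\Gamma,\Lambda}(\Sigma_1)=p_{\Gamma,\Lambda}(\Sigma_2)=\mathbb{L}$. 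If a collineation $\phi$ of $\Sigma^*$ had $\Gamma^{\phi}=\Gamma$ and $\Sigma_1^{\phi}=\Sigma_2$, i.e.\ $\Sigma_{id}^{\phi}=\Sigma_{\pi}$, then Theorem~\ref{nocollin} would force $\pi(1)-\pi(0)\equiv\pm1\pmod t$ or $\gcd(\pi(1)-\pi(0),t)>1$, both impossible since $\pi(1)-\pi(0)=\mu$; taking $\phi$ to be the identity also shows $\Sigma_1\neq\Sigma_2$, so the subgeometries are genuinely distinct. This settles the corollary for $L=\mathbb{L}$ realized in this $\Lambda$.

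To pass to an arbitrary linear set of pseudoregulus type $L$ of $\PG(1,q^t)$, I would use Definition~\ref{pseudo}: realizing $L$ inside $\Lambda$, we have $L=\mathbb{L}^{g}$ for a projectivity $g$ of $\Lambda$. Extend $g$ to a projectivity $G$ of $\Sigma^*$ acting as $g$ on the coordinates $X_1,X_2$ and as the identity on $X_3,\dots,X_t$; then $G$ fixes both $\Gamma$ and $\Lambda$. Put $\Sigma_i'=\Sigma_i^{G}$; these are again canonical subgeometries with $\Gamma\cap\Sigma_i'=\emptyset$, and since projection commutes with a collineation fixing centre and axis, $p_{\Gamma,\Lambda}(\Sigma_i')=p_{\Gamma,\Lambda}(\Sigma_i)^{G}=\mathbb{L}^{G}=L$. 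If some collineation $\phi'$ of $\Sigma^*$ satisfied $\Gamma^{\phi'}=\Gamma$ and $\Sigma_1'^{\,\phi'}=\Sigma_2'$, then (using the left-to-right composition convention) $\psi=G\phi'G^{-1}$ would satisfy $\Gamma^{\psi}=\Gamma$ and $\Sigma_1^{\psi}=\Sigma_2$, contradicting the previous paragraph; hence $\Sigma_1',\Sigma_2'$ are the required pair.

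I do not expect a genuine obstacle: the substantive work is already done in Theorem~\ref{nocollin} (hence in Lemma~\ref{main2} and Corollary~\ref{corollar}), and everything else is routine. The three points that deserve a little care are the elementary fact that $\varphi(t)\le2$ exactly for $t\in\{1,2,3,4,6\}$ (so that the desired $\mu$ exists precisely when $t=5$ or $t>6$), the verification that $p_{\Gamma,\Lambda}$ commutes with a collineation $G$ fixing $\Gamma$ and $\Lambda$, and the bookkeeping of the composition order when conjugating $\phi'$ back to $\psi$.
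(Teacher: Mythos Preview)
Your proof is correct and follows essentially the same route as the paper: choose $\Sigma_1=\Sigma_{id}$, $\Sigma_2=\Sigma_{\pi}$ with $\gcd(\pi(1)-\pi(0),t)=1$ and $\pi(1)-\pi(0)\not\equiv\pm1\pmod t$ (using $\varphi(t)\ge3$), observe that both project to $\mathbb{L}$, and invoke Theorem~\ref{nocollin}. Your additions---the explicit permutation $\pi(i)\equiv\mu i$, the check that $\Sigma_1\neq\Sigma_2$, and the conjugation argument extending the result from $\mathbb{L}$ to an arbitrary linear set of pseudoregulus type---are just the details the paper leaves implicit.
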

\begin{proof}
Let $\Lambda$ be the line $X_3=\ldots=X_{t}=0$ and let $\Gamma$ be the $(t-3)$-subspace $X_1=X_2=0$ in $\Sigma^*$. 
As $t=5$ or $t>6$, we have $\varphi(t)\geq 3$, where $\varphi$ is the Euler function. Thus we may choose a permutation $\pi$ of $N_{t-1}$ such that $\gcd(\pi(1)-\pi(0),t)=1$ and $\pi(1)-\pi(0)\notin \{-1,1\}$.   
Let $\Sigma_1=\Sigma_{id}$ and $\Sigma_2=\Sigma_{\pi}$.
Then $p_{\,\Gamma,\,\Lambda}(\Sigma_1)=p_{\,\Gamma,\,\Lambda}(\Sigma_2)\cong \mathbb{L}$ 
and the assertion follows from Theorem \ref{nocollin}. 
\qed
\end{proof}

Note that $\mathbb{L}$ is not a linear set of rank $k<t$, thus $(i)\Rightarrow (ii)$ in Theorem \ref{counter} cannot hold without further conditions.

\section{Equivalence of linear sets}
\label{sec:5}

We say that the pair $(L,n)$, where $L$ is a linear set of rank $n$ in $\PG(r-1,q^t)$, satisfies condition \eqref{A}, if the following holds.
\begin{enumerate}[{(A)}]
\item \label{A}
For any two $(n-1)$-subspaces $U,U' \subset \PG(rt-1,q)$, such that $\cB(U)=L=\cB(U')$, a collineation $\gamma\in \mathrm{P\Gamma L}(rt,q)$ exists, such that $U^{\gamma}=U'$, 
$\gamma$ preserves the Desarguesian 
spread $\cF_{r,t,q}(\cP)$,
and the induced map on $\PG(r-1,q^t)$ is a collineation (which will again be denoted by $\gamma$).
\end{enumerate}
In the next theorem we follow the proof of \cite[Theorem 2.4]{BoPo2005} by Bonoli and Polverino.

\begin{theorem}
\label{plusproperty}
For $i=1,2$, let $\Sigma_i \cong \PG(n-1,q)$ be two canonical subgeometries of $\Sigma^*=\PG(n-1,q^t)$ and let $\Gamma_i$ be two $(n-1-r)$-subspaces of $\Sigma^*$, such that $\Gamma_i \cap \Sigma_i=\emptyset$. Let $\Lambda_i \subset \Sigma^*\setminus \Gamma_i$ be two $(r-1)$-subspaces and let $L_i=p_{\,\Gamma_i,\,\Lambda_i}(\Sigma_i)$. Suppose that $(L_2,n)$ satisfies condition \eqref{A}. Then $(i) \Rightarrow (ii)$, where (i) and (ii) are the conditions stated in Theorem \ref{counter}.  
\end{theorem}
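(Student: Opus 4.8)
The plan is to follow the argument of \cite[Theorem 2.4]{BoPo2005}: first normalise the two projecting configurations so that they share the same axis and the same center, then translate everything into the field‑reduction picture, invoke condition \eqref{A} there, and finally lift the collineation it provides to a collineation of $\Sigma^*$.

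\textbf{Normalisation.} A collineation $\alpha$ between the $(r-1)$-subspaces $\Lambda_1,\Lambda_2$ of $\Sigma^*$ extends to a collineation $\widehat\alpha$ of $\Sigma^*$ with $\Lambda_1^{\widehat\alpha}=\Lambda_2$ (extend a projective frame of $\Lambda_1$ and its $\alpha$-image to frames of $\Sigma^*$). Replacing $(\Sigma_1,\Gamma_1)$ by $(\Sigma_1^{\widehat\alpha},\Gamma_1^{\widehat\alpha})$ and using that projections commute with collineations, I may assume $\Lambda_1=\Lambda_2=:\Lambda$ and $L_1=L_2=:L$; note $L=L_2$, so $(L,n)$ satisfies \eqref{A}. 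Next, $\Gamma_1$ and $\Gamma_2$ are complements of $\Lambda$ in $\Sigma^*$, and the group of projectivities of $\Sigma^*$ fixing $\Lambda$ pointwise is transitive on the complements of $\Lambda$ (in coordinates with $\Lambda\colon X_{r+1}=\dots=X_n=0$ one writes down an explicit block‑unipotent matrix). Composing with such a projectivity — which fixes $L$ pointwise — I may also assume $\Gamma_1=\Gamma_2=:\Gamma$. It now suffices to exhibit a collineation $\beta$ of $\Sigma^*$ with $\Gamma^{\beta}=\Gamma$ and $\Sigma_1^{\beta}=\Sigma_2$, knowing that $p_{\,\Gamma,\,\Lambda}(\Sigma_1)=p_{\,\Gamma,\,\Lambda}(\Sigma_2)=L$ and that $(L,n)$ satisfies \eqref{A}.

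\textbf{Passing to field reduction.} Write $\Sigma^*=\PG_{q^t}(V)$, $V=V_\Lambda\oplus V_\Gamma$ with $\Lambda=\PG_{q^t}(V_\Lambda)$, $\Gamma=\PG_{q^t}(V_\Gamma)$, let $\rho\colon V\to V_\Lambda$ be the projection with kernel $V_\Gamma$, and let $\Sigma_i=\PG_q(W_i)$. Since $\Sigma_i\cap\Gamma=\emptyset$ one has $W_i\cap V_\Gamma=0$, so $U_i:=\rho(W_i)$ is an $\F_q$-subspace of $V_\Lambda$ of dimension $n$ and $W_i$ is the graph of an $\F_q$-linear map $f_i\colon U_i\to V_\Gamma$. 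An easy computation (the one underlying Theorem \ref{LuPo}) gives $p_{\,\Gamma,\,\Lambda}(\Sigma_i)=\{\la\rho(w)\ra_{q^t}\colon w\in W_i^*\}=\cB(U_i)$ in $\PG_q(V_\Lambda)=\PG(rt-1,q)$, whence $\cB(U_1)=L=\cB(U_2)$ and $\la U_i\ra_{q^t}=V_\Lambda$. Applying \eqref{A} to the $(n-1)$-subspaces $U_1,U_2$ yields a $\gamma\in\PGaL(rt,q)$ with $U_1^{\gamma}=U_2$ preserving the Desarguesian spread $\cF_{r,t,q}(\cP)$; since $\gamma$ permutes the spread members $\F_{q^t}x$, its underlying $\F_q$-semilinear map $g\colon V_\Lambda\to V_\Lambda$ is in fact $\tau$-semilinear over $\F_{q^t}$ for some $\tau\in\Gal(\F_{q^t}/\F_p)$, with $g(U_1)=U_2$.

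\textbf{Lifting.} For $i=1,2$ I would form the ``going‑up'' space $\widehat V_i:=U_i\otimes_{\F_q}\F_{q^t}$ with the surjection $\epsilon_i\colon\widehat V_i\to V_\Lambda$, $u\otimes c\mapsto cu$, and the $\F_{q^t}$-linear bijection $\eta_i\colon\widehat V_i\to V$, $u\otimes c\mapsto c(u+f_i(u))$ (bijective by a dimension count, since its image contains $W_i$ and hence $\la W_i\ra_{q^t}=V$). By construction $\eta_i(U_i\otimes 1)=W_i$ and $\rho\circ\eta_i=\epsilon_i$, so $\eta_i(\ker\epsilon_i)=V_\Gamma$. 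Put $\Phi\colon\widehat V_1\to\widehat V_2$, $u\otimes c\mapsto g(u)\otimes\tau(c)$; this is a $\tau$-semilinear bijection with $\Phi(U_1\otimes 1)=U_2\otimes 1$ and $\epsilon_2\circ\Phi=g\circ\epsilon_1$, hence $\Phi(\ker\epsilon_1)=\ker\epsilon_2$. Then $\widehat g:=\eta_2\circ\Phi\circ\eta_1^{-1}$ is a $\tau$-semilinear bijection of $V$ with $\widehat g(V_\Gamma)=V_\Gamma$ and $\widehat g(W_1)=W_2$, so the collineation $\beta$ of $\Sigma^*$ it induces satisfies $\Gamma^{\beta}=\Gamma$ and $\Sigma_1^{\beta}=\Sigma_2$. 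Together with the normalisation step this produces a collineation of $\Sigma^*$ carrying $(\Gamma_1,\Sigma_1)$ to $(\Gamma_2,\Sigma_2)$, which is \textit{(ii)}.

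I expect the crux to be the lifting step. The subspace $U_i$ obtained by projecting no longer records $W_i$ or $\Gamma$, so one must check that the canonical reconstruction $\epsilon_i\colon U_i\otimes_{\F_q}\F_{q^t}\to V_\Lambda$ recovers the original configuration \emph{compatibly with the projection $\rho$} — this is exactly the identity $\rho\circ\eta_i=\epsilon_i$ — which is what allows the map $g$ supplied by \eqref{A}, a priori living only on $V_\Lambda$, to be transported to a semilinear bijection of the whole of $V$ respecting $V_\Gamma$ and sending $W_1$ to $W_2$. A smaller but indispensable point is that, because $\gamma$ preserves the Desarguesian spread, its underlying map $g$ is semilinear over the large field $\F_{q^t}$ and not merely over $\F_q$; otherwise the map $\Phi$ could not even be defined.
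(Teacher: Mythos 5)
Your proposal is correct and follows essentially the same route as the paper: reduce to a common centre and axis, project the subgeometries onto the axis to obtain $(n-1)$-subspaces $U_1,U_2$ with $\cB(U_1)=L=\cB(U_2)$, invoke condition (A) to get an $\F_{q^t}$-semilinear map of $V_\Lambda$ carrying $U_1$ to $U_2$, and lift it to a semilinear map of the whole space fixing the centre and carrying $W_1$ to $W_2$. The only difference is in packaging: the paper performs the lift by extending $\gamma'$ arbitrarily so as to fix $Z$ and then correcting the images of a basis of $V_1$ modulo $Z$, whereas you obtain the same lift canonically via the tensor-product/graph construction.
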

\begin{proof}
The collineation $\alpha$ in Theorem \ref{counter} can be extended to an element $\widehat{\alpha}\in \mathrm{P\Gamma L}(n,q^t)$ such that 
$\Gamma_1^{\wa}=\Gamma_2$. Then 
$L_2=L_1^{\wa}=p_{\,\Gamma_2,\,\Lambda_2}(\Sigma_1^{\wa})$. 
Hence it is sufficient to prove the existence of a collineation $\phi\in \mathrm{P\Gamma L}(n,q^t)$ such that 
$\Gamma_2^{\phi}=\Gamma_2$ and $\Sigma_1^{\wa \phi}=\Sigma_2$. 
If such $\phi$ exists, then $\beta:=\wa \phi$.

So it is enough to prove the assertion when $\Gamma_1=\Gamma_2=:\Gamma$, 
$\Lambda_1=\Lambda_2=:\Lambda$ and $L_1=L_2=:L$. 
Let $\Sigma^*=\PG_{q^t}(\F_{q^t}^n)$ and for $j=1,2$, let $\Sigma_j=\cB(V_j)$, where $V_j$ 
is an $n$-dimensional $\F_q$-subspace of $\F_{q^t}^n$. 
As $\Sigma_j$ is a canonical subgeometry, we have $\la V_j \ra_{q^t}=\F_{q^t}^n$, for $j=1,2$. Let $V_1=\la v_1,v_2,\ldots v_n \ra_q$. Also let $\Gamma=\PG_{q^t}(Z)$ and $\Lambda=\PG_{q^t}(R)$. Note that $V_j':=(V_j+Z)\cap R$ is an $n$-dimensional $\F_q$-vector space in $R$.
As $p_{\,\Gamma,\,\Lambda}(\Sigma_1)=p_{\,\Gamma,\,\Lambda}(\Sigma_2)=L$, it follows that 
$\cB(V_1')=\cB(V_2')$. 
As $(L,n)$ satisfies condition $\eqref{A}$, it follows that there exists a non-singular $\F_{q^t}$-semilinear map 
$\gamma' \colon R \rightarrow R$ such that $V_1'^{\gamma'}=V_2'$. 
The map $\gamma'$ can be extended to a non-singular semilinear map $\wg \colon \F_{q^t}^n \rightarrow \F_{q^t}^n$ such that $Z^{\wg}=Z$. Then for each vector $v\in \F_{q^t}^n$ and $\mu\in \F_{q^t}$ we have $(\mu v)^{\wg}=\mu^{p^k}v^{\wg}$ for some integer $k$.
Since all of $R+Z=\F_{q^t}^n$ and $V_j+Z$ ($j=1,2$) are direct sums, 
for any $v\in V_j\setminus\{0\}$ and $z\in Z$, the intersection
$(\la v\ra_q+Z)\cap R$ is a one-dimensional $\F_q$-subspace, say $\la v'\ra_q$, with $v'\in V_j'$;
this implies $hv'=v+z'$ for some $z'\in Z$ and $h\in\mathbb F_q$, whence $v+z=hv'+(z-z')\in V_j'+Z$.
This implies $V_j+Z\subseteq V'_j+Z$ and therefore $V_j+Z=V'_j+Z$. 
Then we have
\[V_2+Z=V_2'+Z=V_1'^{\hat\gamma}+Z^\wg=(V_1'+Z)^\wg=(V_1+Z)^{\wg},\] 
and hence for each $i=1,2,\ldots,n$, $v_i^{\wg}=v_i'+z_i$ for some $v_i'\in V_2$ and $z_i\in Z$. 
We claim that $v_1',v_2',\ldots,v_n'$ is an $\F_q$-basis of $V_2$. 
To see this, suppose $\sum_{i=1}^n \lambda_i v_i'=0$ with $\lambda_i \in \F_q$ for $i=1,2,\ldots,n$. 
Then $\sum_{i=1}^n \lambda_i v_i^{\wg}=\sum_{i=1}^n \lambda_i z_i$. As on the left-hand side $v_1^{\wg},v_2^{\wg},\ldots, v_n^{\wg}$ are $\F_q$-independent, it follows that either $\lambda_i=0$ for $i=1,2,\ldots,n$, or there is a non-zero vector $z\in V_1^{\wg}\cap Z$.
The map $\wg$ fixes $Z$ and hence in the latter case $z^{\wg^{-1}}\in V_1\cap Z$, a contradiction because of $V_1 \cap Z =\{0\}$. 
As $\Sigma_2$ is a canonical subgeometry, it follows that $v_1',v_2',\ldots,v_n'$ are linearly independent over $\F_{q^t}$. Let $f$ be the $\F_{q^t}$-semilinear map of $\F_{q^t}^n$ such that $v_i^f=v_i'$ for each $i=1,2,\ldots,n$ and $(\mu v)^f=\mu^{p^k}v^f$ for each $\mu\in \F_{q^t}$ and $v\in \F_{q^t}^n$. If $P=\la z \ra_{q^t}\in \Gamma$, then we have $z=\sum_{i=1}^n a_iv_i$ for some $a_i\in\F_{q^t}$. Then 
$z^f=\sum_{i=1}^n a_i^{p^k}v_i'=\sum_{i=1}^n a_i^{p^k}(v_i^{\wg}-z_i)=z^{\wg}-\sum_{i=1}^n a_i^{p^k}z_i\in Z$, and hence the collineation induced by $f$ fixes $\Gamma$ and maps $\Sigma_1$ to $\Sigma_2$.  
\qed
\end{proof}

\begin{remark}
  In Theorem \ref{plusproperty}, it is not assumed, contrary to Theorem \ref{counter},
  that the linear sets are not of rank less than $n$.
\end{remark}

\begin{remark}
  By \cite[Proposition 2.3]{BoPo2005}, condition \eqref{A} holds for any $\F_q$-linear blocking set of exponent
  $e$ (where $p^e=q$) in $\PG(2,q^t)$.
\end{remark}

\begin{remark}
Up to the knowledge of the authors, no linear set is known which is not of pseudoregulus type in $\PG(1,q^t)$ 
and does not satisfy condition \eqref{A}.
\end{remark}

\begin{theorem}\label{generale}
  If $L$ is a linear set of rank $n$ in $\Lambda=\PG(r-1,q^t)$,
  $\langle L\rangle=\PG(r-1,q^t)$ and $(L,n)$ does not satisfy condition \eqref{A},
  then in $\PG(n-1,q^t)\supset\Lambda$ there are a subspace
  $\Gamma=\Gamma_1=\Gamma_2$ disjoint from $\Lambda$, and two $q$-order canonical subgeometries
  $\Sigma_1,\Sigma_2\subset\PG(n-1,q^t)\setminus\Gamma$
  such that $L=p_{\,\Gamma,\,\Lambda}(\Sigma_1)=p_{\,\Gamma,\,\Lambda}(\Sigma_2)$, and such that condition (ii) in Theorem \ref{counter} does not hold.
\end{theorem}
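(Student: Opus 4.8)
The plan is to realise $L$ as a projection of canonical subgeometries of $\PG(n-1,q^t)$ in a coordinate frame tailored to the failure of \eqref{A}, and then to show that a collineation as in \textit{(ii)} of Theorem \ref{counter} would descend, through the quotient by $\Gamma$, to precisely the collineation whose existence \eqref{A} denies. We may assume $n>r$, since for $n=r$ condition \eqref{A} is easily seen to hold and the statement would be vacuous.

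First I would fix the ambient data: write $\Sigma^*=\PG_{q^t}(\F_{q^t}^n)$, choose a decomposition $\F_{q^t}^n=Z\oplus R$ with $\dim_{q^t}Z=n-r$ and $\dim_{q^t}R=r$, and put $\Gamma=\PG_{q^t}(Z)$, $\Lambda=\PG_{q^t}(R)$, identifying $\Lambda$ with the $\PG(r-1,q^t)$ carrying $L$ and $\PG_q(R)$ with $\PG(rt-1,q)$. Because $(L,n)$ fails \eqref{A}, there are $n$-dimensional $\F_q$-subspaces $W_1,W_2\subseteq R$ with $\cB(W_1)=L=\cB(W_2)$ for which no $\gamma\in\PGaL(rt,q)$ preserving $\cF_{r,t,q}(\cP)$ and inducing a collineation of $\PG(r-1,q^t)$ satisfies $W_1^\gamma=W_2$. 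Since $\la L\ra=\Lambda$ we have $\la W_i\ra_{q^t}=R$, so I can choose an $\F_q$-basis $w^{(i)}_1,\ldots,w^{(i)}_n$ of $W_i$ whose first $r$ vectors form an $\F_{q^t}$-basis of $R$; fixing an $\F_{q^t}$-basis $z_1,\ldots,z_{n-r}$ of $Z$ I set
\[
  V_i:=\la w^{(i)}_1,\ldots,w^{(i)}_r,\ w^{(i)}_{r+1}+z_1,\ldots,w^{(i)}_n+z_{n-r}\ra_q .
\]
A routine dimension count gives $\dim_{\F_q}V_i=n$, $\la V_i\ra_{q^t}=\F_{q^t}^n$, $V_i\cap Z=\{0\}$ and $(V_i+Z)\cap R=W_i$. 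Hence $\Sigma_i:=\cB(V_i)$ is a $q$-order canonical subgeometry disjoint from $\Gamma$ with $p_{\,\Gamma,\,\Lambda}(\Sigma_i)=\cB(W_i)=L$, which is a constructive instance of Theorem \ref{LuPo} and establishes the first assertion with $\Gamma_1=\Gamma_2=\Gamma$.

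For the remaining assertion, suppose towards a contradiction that there is a collineation $\beta$ of $\Sigma^*$ with $\Sigma_1^\beta=\Sigma_2$ and $\Gamma^\beta=\Gamma$, and let $\widehat\beta$ be an associated semilinear map of $\F_{q^t}^n$ with companion automorphism $x\mapsto x^{p^k}$. Then $\widehat\beta(Z)=Z$, and since a canonical subgeometry determines its defining $\F_q$-subspace up to an $\F_{q^t}$-scalar (and $\cB(\widehat\beta(V_1))=\Sigma_1^\beta=\Sigma_2=\cB(V_2)$), after rescaling $\widehat\beta$ we may assume $\widehat\beta(V_1)=V_2$. As $\widehat\beta$ stabilises $Z$, it induces a bijective semilinear map $\gamma_0$ of $\F_{q^t}^n/Z\cong R$ with companion $x\mapsto x^{p^k}$; under this identification $(V_i+Z)/Z$ corresponds to $(V_i+Z)\cap R=W_i$, so $\widehat\beta(V_1)=V_2$ and $\widehat\beta(Z)=Z$ yield $\gamma_0(W_1)=W_2$. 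Finally, being $\F_{q^t}$-semilinear, $\gamma_0$ is a fortiori $\F_q$-semilinear, hence lies in $\PGaL(rt,q)$ on $\PG_q(R)$; it maps each one-dimensional $\F_{q^t}$-subspace of $R$ to another, hence preserves $\cF_{r,t,q}(\cP)$; and as a semilinear map of the $\F_{q^t}$-space $R$ it induces a collineation of $\PG(r-1,q^t)$. Thus $\gamma_0$ would be a collineation of exactly the kind \eqref{A} requires and would carry $W_1$ to $W_2$, contradicting the choice of $W_1,W_2$. Therefore \textit{(ii)} does not hold.

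I expect the routine dimension count for the $V_i$ and the identity $\cB(\widehat\beta(V_1))=\Sigma_1^\beta$ to cause no trouble; the step needing the most care is the passage to the quotient $\F_{q^t}^n/Z$ — checking that the natural isomorphism $R\cong\F_{q^t}^n/Z$ really sends $W_i=(V_i+Z)\cap R$ to $(V_i+Z)/Z$, and that the induced map $\gamma_0$ simultaneously inherits all four properties demanded in \eqref{A} (membership in $\PGaL(rt,q)$, preservation of $\cF_{r,t,q}(\cP)$, descent to a collineation of $\PG(r-1,q^t)$, and $W_1^{\gamma_0}=W_2$).
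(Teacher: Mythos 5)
Your construction of $\Gamma$, $\Sigma_1$, $\Sigma_2$ is exactly the paper's (same subspaces $V_i$), but your treatment of the hypothetical collineation $\beta$ takes a genuinely different and shorter route. The paper only knows, for each $w\in W_1$ separately, that \emph{some} scalar $\lambda_w$ puts $\lambda_w(w+z_w)^{\beta'}$ into $V_2$; the bulk of its proof (the subspace $T_\lambda$, property 2., equation \eqref{X} and the $q$-order subline argument) goes into showing that a single scalar works uniformly, so that $v\mapsto\lambda v^{\beta'\pi}$ is well defined on all of $W_1$. You bypass all of that by rescaling $\widehat\beta$ so that $\widehat\beta(V_1)=V_2$, which rests on the assertion that a canonical subgeometry determines its defining $\F_q$-subspace up to an $\F_{q^t}$-scalar. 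That assertion is true, but it is exactly the content the paper labours to establish, so you should prove it rather than cite it: if $\cB(V)=\cB(V')$ for two spanning $n$-dimensional $\F_q$-subspaces, take an $\F_q$-basis $v_1,\ldots,v_n$ of $V$ (an $\F_{q^t}$-basis of $\F_{q^t}^n$); then $V'=\la\mu_1v_1,\ldots,\mu_nv_n\ra_q$ for some $\mu_i\in\F_{q^t}^*$, and since $V'$ also meets $\la v_i+v_j\ra_{q^t}$, comparing coordinates gives $\mu_i\mu_j^{-1}\in\F_q^*$, whence $V'=\mu_1V$. With this lemma supplied, the rest of your argument (that $\widehat\beta$ stabilises $Z$, that $\gamma_0=\pi\circ\widehat\beta|_R$ is a non-singular $\F_{q^t}$-semilinear map of $R$ sending $W_1=\pi(V_1)$ to $W_2$, and that it therefore witnesses condition \eqref{A} for the chosen pair $U_1,U_2$) is correct, and the whole proof is cleaner than the published one.
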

\begin{proof}
  Let $U_1$ and $U_2$ be two $(n-1)$-subspaces of $\PG_q(R)$, 
  where $R$ is the $r$-dimensional $\F_{q^t}$-subspace of $\F_{q^t}^n$ such that $\Lambda=\PG_{q^t}(R)$.
  Assume that $\cB(U_1)=L=\cB(U_2)$. Let $W_i=\langle  w_1^{(i)},w_2^{(i)},\ldots,w_n^{(i)}\rangle_q$, $i=1,2$, be
  the $n$-dimensional vector $\F_q$-subspaces of $R$ 
  whose associated projective subspaces in $\PG(rt-1,q)$ are $U_1$ and $U_2$, respectively.
  From $\langle L\rangle=\PG(r-1,q^t)$ we may assume that $w_1^{(1)},w_2^{(1)},\ldots,w_r^{(1)}$ are
  $\F_{q^t}$-linearly independent, and also that $w_1^{(2)},w_2^{(2)},\ldots,w_r^{(2)}$ are $\F_{q^t}$-linearly independent.
  Let $\Gamma$ be an $(n-r-1)$-subspace in $\PG(n-1,q^t)$ disjoint from $\Lambda$, associated with
  the vector subspace $Z=\langle z_1,z_2,\ldots, z_{n-r}\rangle_{q^t}$.
  For $i=1,2$, let $\Sigma_i$ be the $\F_q$-linear set defined by the following $\F_q$-subspace of $\F_{q^t}^n$:
  \[V_i:=
    \langle w_1^{(i)},w_2^{(i)},\ldots,w_r^{(i)},
    w_{r+1}^{(i)}+z_1,w_{r+2}^{(i)}+z_2,\ldots,w_n^{(i)}+z_{n-r}\rangle_q.
  \]
  Since those $n$ vectors of $\F_{q^t}^n$ are also $\F_{q^t}$-linearly independent, $\Sigma_i$ is a canonical
  $q$-order subgeometry.
  Furthermore, $w_1^{(i)}$, $w_2^{(i)}$, $\ldots$, $w_n^{(i)}$,
  $z_1$, $z_2$, $\ldots$, $z_{n-r}$ are $\F_q$-linearly independent,
  and this implies $\Sigma_i\cap\Gamma=\emptyset$.
We summarize here some properties of our construction, which we will use later.
For $i=1,2$ we have the following.
\begin{enumerate}
\item
\label{uno}
For each $w\in W_i$, there exists a unique $z_w\in Z$ such that $w+z_w\in V_i$,
\item
\label{due}
if $w$ and $w'$ are two $\F_q$-independent vectors in $W_i$, then 
$w+z_w$ and $w'+z_{w'}$ are $\F_{q^t}$-independent vectors in $V_i$.
\end{enumerate}
If $w=\sum_{j=1}^n \alpha_j w_j^{(i)}$ with $a_j\in\F_q$, $j=1,2,\ldots,n$, then let $z_w=\sum_{j=r+1}^n \alpha_j z_{j-r}$. 
The unicity of $z_w$ follows from $\Sigma_i \cap \Gamma=\emptyset$.
To see 2., note that $\Psi_i \colon U_i \rightarrow \Sigma_i \colon \la w \ra_q \mapsto \la w+z_w \ra_{q^t}$ is a bijection, as $|U_i|=|\Sigma_i|=\theta_{n-1}$. 
 
  Now assume that condition (ii) in Theorem \ref{counter} holds.
  Let $\beta':\F_{q^t}^n\rightarrow\F_{q^t}^n$ be the semilinear map associated with the collineation $\beta$; so, an integer $\nu$ exists such that
for any $a\in \F_{q^t}$ and $v\in\F_{q^t}^n$ there holds $(av)^{\beta'}=a^{p^{\nu}} v^{\beta'}$. 
Denote by $\pi$ the canonical projection from $Z\oplus R$ to $R$.
Fix an element $w\in W_1$. Then \ref{uno}.\ and the condition $\Sigma_1^{\beta}=\Sigma_2$ imply the existence of $z_w\in Z$ and $\lambda \in \F_{q^t}^*$ depending on $z_w$ such that $\xi_{\lambda}:=\lambda (w+z_w)^{\beta'}\in V_2$. 
Let $T_{\lambda}=\{v \in W_1 \colon \lambda v^{\beta' \pi}\in W_2\}$. 
Note that $T_{\lambda}$ is an $\F_q$-vector subspace of $R$.
As $\lambda (w+z_w)^{\beta'}\in V_2$, we have $\lambda(w+z_w)^{\beta'\pi}=\lambda w^{\beta' \pi}\in V_2^{\pi}=W_2$, thus $w\in T_{\lambda}$.
We are going to show that $W_1=T_{\lambda}$. Suppose to the contrary that there exists $w'\in W_1 \setminus T_{\lambda}$. As $T_{\lambda}$ is an $\F_q$-vector space, $w$ and $w'$ are $\F_q$-independent. The same argument as above yields the existence of $z_{w'}\in Z$ and $\mu \in \F_{q^t}^*$ such that $\xi_{\mu}:=\mu (w'+z_{w'})^{\beta'}\in V_2$ and hence $\mu w'^{\beta' \pi}\in W_2$.
Then \ref{due}.\ yields that $\la w+z_w \ra_{q^t}$ and $\la w'+z_{w'} \ra_{q^t}$ are distinct points of $\Sigma_1$, and hence $\la \xi_{\lambda} \ra_{q^t}$ and $\la \xi_{\mu} \ra_{q^t}$ are distinct points of $\Sigma_2$. 
First we prove
\begin{equation}
\label{X}
\la \xi_{\lambda}, \xi_{\mu} \ra_{q^t} \cap V_2 =
\la \xi_{\lambda}, \xi_{\mu} \ra_{q}.
\end{equation}
Denote by $X$ de left hand side of \eqref{X}. 
The inclusion $\supseteq$ trivially holds. 
Suppose to the contrary 
$\dim_{\F_q}(X)\geq 3$, then there exist three $\F_q$-independent vectors, $v_1,v_2,v_3\in X$. 
We can extend the set $\{v_1,v_2,v_3\}$ to a basis of $V_2$. As $\dim_{\F_{q^t}}(\la v_1,v_2,v_3\ra_{q^t})=2$, it follows that $\la V_2 \ra_{q^t}\neq \F_{q^t}^n$,
a contradiction since $\Sigma_2$ is a canonical subgeometry. 

Let $P_1=\la w+z_w \ra_{q^t}$ and $P_2=\la w'+z_{w'} \ra_{q^t}$. 
Denote by $\ell$ the $q$-order subline $\la P_1,P_2 \ra \cap \Sigma_1$.
Then $\la (w+z_w)^{\beta'}+ (w'+z_{w'})^{\beta'} \ra_{q^t}=\la \xi_{\lambda} + \lambda \mu^{-1}\xi_{\mu} \ra_{q^t}\in \ell^{\beta} \cap \Sigma_2$.
It follows that there exists some $\delta\in \F_{q^t}^*$ such that 
$\delta(\xi_{\lambda}+\lambda \mu^{-1}\xi_{\mu})\in V_2$.
It follows from \eqref{X} that $\delta \in \F_q^*$ and 
$\delta\lambda\mu^{-1}\in \F_q^*$. Hence $\lambda \mu^{-1} \in \F_q^*$.
This is a contradiction, as in this case $(\lambda \mu^{-1})\mu w'^{\beta' \pi}= \lambda w'^{\beta' \pi} \in W_2$, contradicting the choice of $w'$. 
Hence $T_\lambda=W_1$, that is, $\lambda w^{\beta'\pi}\in W_2$ for any $w\in W_1$.

Now define the map $\varphi' \colon R \rightarrow R \colon v \mapsto \lambda v^{\beta' \pi}$. As $\beta'$ is semilinear, $\pi$ is linear and $v \mapsto \lambda v$ is also linear, it follows that $\varphi'$ is $\F_{q^t}$-semilinear. Also, $\varphi'$ is non-singular. Let $\varphi$ be the associated collineation of $\Lambda$. Then
\[U_1^{\varphi}=\{ \la \lambda w^{\beta' \pi}\ra_q \colon w\in W_1^*\}=\PG_q(W_2)= U_2,\]
because of $T_{\lambda}=W_1$.

  We have constructed an example for which, if condition (ii) 
  in Theorem \ref{counter} is satisfied, then also condition \eqref{A} must be satisfied: this concludes the proof.
\qed
\end{proof} 

Scattered linear sets of pseudoregulus type in a line are, under the field reduction map, embeddings
of minimum dimension of $\PG(t-1,q)\times\PG(t-1,q)$, which have been studied in \cite{LaShZa2013}.
These embeddings are projections of Segre varieties.
A similar representation of the clubs as projections of Segre varieties can be found in \cite{LaZa2014_3}
(see also  \cite{LaZa2014_2}).
By \cite[Theorem 16]{LaShZa2013}, $(\mathbb{L},t)$ does not fulfill condition \eqref{A}, for $t=5$ or $t>6$, where $\mathbb{L}$ is the scattered linear set of pseudoregulus type
in $\PG(1,q^t)$. By Theorem \ref{generale} this yields the counterexamples as shown
in the previous sections.
Many results in \cite{LaVa2010} are on linear sets in $\PG(1,q^3)$; hence, the following result
can be of interest:
\begin{proposition}
  Condition \eqref{A} is fulfilled by linear sets of rank three in $\PG(1,q^3)$, $q>2$, for $n=3$.
\end{proposition}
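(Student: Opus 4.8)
The plan is to classify the linear sets of rank three in $\PG(1,q^3)$ and check condition \eqref{A} for each class. Recall that a linear set of rank three in $\PG(1,q^3)$ has either $q^2+q+1$ points (scattered, i.e.\ of pseudoregulus type) or $q^2+1$ points (a \emph{club}), by the standard classification of rank-$3$ linear sets on a projective line; there is no other possibility when $q>2$. For the scattered case one would invoke the description of the subspaces $U\subset\PG(5,q)$ with $\cB(U)=\mathbb L$: by \cite{LaShZa2013} the relevant $(n-1)=2$-subspaces lying on $\cQ_{2,q}$ split into a single orbit under the stabilizer of $\cQ_{2,q}$ preserving the Desarguesian spread precisely because the exceptional behaviour in \cite[Theorem 16]{LaShZa2013} only begins at $t\ge5$, $t\ne6$; since here $t=3$, all such $U,U'$ are equivalent under a spread-preserving collineation whose induced map on $\PG(1,q^3)$ is a collineation, so $(\mathbb L,3)$ satisfies \eqref{A}.

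For the club case, I would use the Bonoli--Polverino framework directly. A club of rank $3$ in $\PG(1,q^3)$ is (up to $\PGL$) the linear set $\{\la(1,\lambda)\ra_{q^3}\colon \lambda\in\F_q\}\cup\{\la(0,1)\ra_{q^3}\}$, or equivalently is obtained by projecting a canonical subgeometry $\PG(2,q)$ from a point. Via field reduction this club is $\cB(U)$ for a plane $U\subset\PG(5,q)$ meeting the Desarguesian line-spread in the pattern: one spread line $\ell_0\subset U$ (the ``head'' of the club) and the remaining points of $U$ distributed one per spread line over the $q$ lines of the subline through the base point. The key step is then to show that any two planes $U,U'$ with $\cB(U)=\cB(U')$ equal to a fixed club are equivalent under a spread-preserving collineation inducing a collineation downstairs. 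This should follow because such a plane is determined, up to the stabilizer action, by (a) the distinguished spread line $\ell_0$ it contains, which is forced to lie over the multiple point of the club, and (b) a transversal subline configuration; the stabilizer of the spread acting on $\PG(1,q^3)$ is $\PGaL(2,q^3)$, which acts transitively enough on these configurations. Concretely, one can argue as in \cite[Theorem 2.4]{BoPo2005}: lift a $\PGaL(2,q^3)$-element fixing the club to $\PGaL(6,q)$, correct it to preserve the spread, and observe that the residual freedom on $U$ is exactly an $\F_q$-linear map of the $2$-plane, which can be absorbed. The hypothesis $q>2$ is needed so that the subline through the base point genuinely has $q\ge3$ points, making the configuration rigid enough.

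The main obstacle I anticipate is the club case, specifically ruling out a plane $U$ with $\cB(U)$ a club but where the internal structure of $U$ relative to the spread cannot be matched to a reference plane by a spread-preserving map. One has to be careful that the two $\F_q$-linear structures — the one on $U$ coming from $\PG(5,q)$ and the one on the axis line coming from the field reduction of $\F_{q^3}$ — interact correctly; this is precisely the subtlety the authors flag in the introduction about the erroneous use of $\delta$ as a linear rather than merely projective map in \cite{LaVa2010}. I would handle this by working with the underlying $\F_q$-vector spaces throughout (as in the proof of Theorem \ref{plusproperty}) rather than with projective maps, writing $U=\PG_q(W)$, $U'=\PG_q(W')$ with $W,W'\le\F_{q^3}^2$ three-dimensional over $\F_q$, and producing an explicit $\F_{q^3}$-semilinear $\gamma\colon\F_{q^3}^2\to\F_{q^3}^2$ with $W^\gamma=W'$. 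Since $\dim_{\F_q}W=3$ is small, the computation reduces to solving a short system over $\F_{q^3}$, and the transitivity of $\PGaL(2,q^3)$ on clubs (with a fixed head) closes the argument. A short remark would point out that for $q=2$ a club of rank three degenerates (it has only $5=2^2+1$ points and the ``subline'' has just two points), which is why the hypothesis $q>2$ cannot be dropped in this shape, although \eqref{A} may still hold there by a direct check.
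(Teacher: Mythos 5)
Your case split misses one case and, more importantly, leaves the hardest case unproved. First, the classification: a rank-three linear set in $\PG(1,q^3)$ is not only a scattered set ($q^2+q+1$ points) or a club ($q^2+1$ points); it can also be a single point, when the plane $U$ lies inside one spread element. The paper treats this case first (it is immediate: $U=U'=\cF_{2,3,q}(L)$), but your assertion that ``there is no other possibility when $q>2$'' is false as stated. For the scattered case your route is essentially the paper's (reduce to the planes on the hypersurface $\cQ_{2,q}$ of \cite{LaShZa2013} and use the explicit spread-preserving collineations there), though you silently assume that every scattered rank-three linear set in $\PG(1,q^3)$ is of pseudoregulus type; the paper proves this via the transitivity of the stabilizer of a canonical subgeometry on the points not on any of its lines (\cite[Proposition 3.1]{BoPo2005}).

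The genuine gap is the club case, which you correctly flag as the main obstacle but then only plan rather than prove. Phrases like ``acts transitively enough on these configurations'' and ``the computation reduces to solving a short system over $\F_{q^3}$'' are exactly where the content should be. The paper's argument is: after using a spread-stabilizing projectivity $\gamma$ (scalar multiplication by $\beta\alpha^{-1}$ on a fiber over a non-head point $P$) to force $U^\gamma$ and $U'$ to share a point $X$ over $P$, one compares the lines $\ell_1=U^\gamma\cap\cF_{2,3,q}(H)$ and $\ell_2=U'\cap\cF_{2,3,q}(H)$ over the head $H$; if they differ, one takes a line $\ell'\subset U'$ through $X$ avoiding $\ell_1\cap\ell_2$, notes that $\cB(\ell')$ is a $q$-order subline of the club, invokes the fact that \emph{a club contains no irregular sublines} (\cite[Corollary 13]{LaVa2010}, \cite[Proposition 5]{FaSz2009}) to obtain a line $\ell\subset U^\gamma$ with $\cB(\ell)=\cB(\ell')$, and concludes $\ell=\ell'$ because two transversal lines to the regulus $\cF_{2,3,q}(\cB(\ell))$ through a common point coincide; hence $U^\gamma=U'$. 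This no-irregular-sublines rigidity is the key idea absent from your proposal, and it is also where $q>2$ actually enters (for $q=2$ every $3$-subset of $\PG(1,8)$ is a subline, so the property degenerates) --- your heuristic about the subline having $q\ge 3$ points is in the right direction but does not substitute for the argument. Without this step, two planes with the same club image could a priori lie in different orbits, and condition \eqref{A} would not follow.
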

\begin{proof}
  A linear set $L$ of rank three in $\PG(1,q^3)$ is either a point, or a club, or a scattered linear set.
  
  If $L$ is a point, then from
  \begin{equation}\label{hp:A}
    \cB(U)=\cB(U'),\quad \dim(U)=\dim(U')=2,
  \end{equation}
  one obtains $U=U'=\cF_{2,3,q}(L)$.
  This implies \eqref{A}.
  
  Assume now that $L$ is a club.
  Denote by $H$ its head, which is uniquely defined \cite{FaSz2009,LaVa2010}.
  Assume that the subspaces $U$ and $U'$ satisfy \eqref{hp:A}. 
	Take a point $P=\la (v_0,v_1) \ra_{q^3}\in L\setminus H$ and let $A=\cF_{2,3,q}(P)\cap U$ and $X=\cF_{2,3,q}(P)\cap U'$. 
	Then $A=\la\alpha(v_0,v_1) \ra_q$ and $X=\la\beta(v_0,v_1) \ra_q$ for some $\alpha,\beta\in \F_{q^3}^*$, and the map 
	$\gamma \colon \la(x_0,x_1) \ra_q \mapsto \la\beta\alpha^{-1}(x_0,x_1) \ra_q$ is a projectivity of $\PG(5,q)$ belonging to the elementwise stabilizer of the 	Desarguesian spread, and mapping $A$ into $X$. Then $X\in U^\gamma\cap U'\setminus\cF_{2,3,q}(H)$.
  The element of $\mathrm{P\Gamma L}(2,q^3)$ associated with $\gamma$ is the identity map.
  Denote by $\ell_1$ and $\ell_2$ the lines $U^{\gamma}\cap \cF_{2,3,q}(H)$ and $U'\cap \cF_{2,3,q}(H)$, respectively. 
	If $\ell_1=\ell_2$, then $U^{\gamma}=\la \ell_1, X \ra=U'$. 
	Otherwise let $\ell'$ be a line in $U'$ through $X$ such that $\ell_1 \cap \ell_2 \notin \ell'$. 
  The club $L$ does not contain irregular sublines \cite[Corollary 13]{LaVa2010}, \cite[Proposition 5]{FaSz2009}.
  Applying this result to the $q$-order subline $\cB(\ell')$, we have that a line $\ell$ in $U^\gamma$
  exists such that $\cB(\ell)=\cB(\ell')$.
  The point $X$ is on $\ell$.
  The lines $\ell$, $\ell'$ of $\PG(5,q)$ are transversal lines to the regulus $\cF_{2,3,q}(\cB(\ell))$,
  having a common point, whence $\ell=\ell'$.
  This implies $U^\gamma=\la \ell, \ell_1 \cap \ell_2 \ra=U'$ and \eqref{A}.
  
  Up to projectivities there is a unique scattered linear set in $\PG(1,q^3)$. 
	To see this, fix a $q$-order canonical subgeometry $\Sigma$ of $\Sigma^*=\PG(2,q^3)$ and denote by $\cI$ the set of points of $\Sigma^*$ not contained in a line of $\Sigma$. Denote by $G$ the stabilizer of $\Sigma$ in the group of projectivities of $\Sigma^*$. Each scattered linear set of rank three of $\PG(1,q^3)$ can be obtained as a projection of $\Sigma$ from a point in $\cI$.  In Theorem \ref{counter} the implication $(ii) \Rightarrow (i)$ holds and hence the number of projectively non-equivalent scattered linear sets of rank three in $\PG(1,q^3)$ is at most the number of orbits of $G$ on the points of $\cI$. It follows from	\cite[Proposition 3.1]{BoPo2005} that $G$ is transitive on $\cI$ and hence there is a unique scattered linear set of rank three in $\PG(1,q^3)$. An alternate proof can be obtained from \cite{BaJa2014}, where the uniqueness of an exterior splash is proved, taking into account that the exterior splashes dealt with in that paper are precisely the scattered linear sets of rank three \cite{LaZa2015}.
  
  Assume that $L$ is a scattered linear set of rank three, 
  hence a linear set of pseudoregulus type,  
  and that \eqref{hp:A} holds.
  Then $U$ and $U'$ are planes contained in the hypersurface $\cQ_{2,q}$ defined in 
  \cite[eqs. (7),(8)]{LaShZa2013}.
  By \cite[Corollary 10]{LaShZa2013}, $U=S_{h,k}$, $U'=S_{h',k'}$ for $h,h'\in\{1,2\}$, $N(k)=N(k')=1$.
  A collineation $\gamma$ satisfying \eqref{A} can be obtained in the form $\varphi_{0,0}(a,b)$
  if $h=h'$, and $\psi_{0,0}(a,b)$ if $h\neq h'$, for suitable $a,b\in\F_{q^3}$.
\qed
\end{proof}

\begin{flushright}
Bence Csajb\'ok\\
Dipartimento di Tecnica e Gestione dei Sistemi Industriali,\\
Universit\`a di Padova, Stradella S. Nicola, 3, I-36100 Vicenza, Italy \\
e-mail: \texttt{csajbok.bence@gmail.com}

\begin{flushright}
Corrado Zanella\\
Dipartimento di Tecnica e Gestione dei Sistemi Industriali,\\
Universit\`a di Padova, Stradella S. Nicola, 3, I-36100 Vicenza, Italy \\
e-mail: \texttt{corrado.zanella@unipd.it}
\end{flushright}

\end{flushright}


\begin{thebibliography}{}

\bibitem{BaJa2014}
{Barwick S.G., Jackson W.A.,} 
Exterior splashes and linear sets of rank 3, \url{<http://arxiv.org/abs/1404.1641v1>}

\bibitem{BoPo2005}
{Bonoli G., Polverino O.,}
$\mathbb{F}_q$-linear blocking sets in $\mathrm{PG}(2,q^4)$,
Innov.\ Incidence Geom.\ {2}, 35--56 (2005)

\bibitem{DoDu9}
{Donati G., Durante N.,}
Scattered linear sets generated by collineations between pencils of lines,
J. Algebraic Combin.\ 40, 1121--1134 (2014)

\bibitem{FaSz2009}
{Fancsali  Sz.L., Sziklai P.,} Description of the clubs,
Ann.\ Univ.\ Sci.\ Budapest. E\"otv\"os Sect.\ Math.\ { 51}, 141--146 (2008)

\bibitem{LaShZa2013}
{Lavrauw M., Sheekey J., Zanella C.,}
On embeddings of minimum dimension of $\mathrm{PG}(n,q)\times \mathrm{PG}(n,q)$, 
Des.\ Codes Cryptogr.\ { 74}, 427--440 (2015)

\bibitem{LaVa2010}
{Lavrauw M., Van de Voorde G.,}
On linear sets on a projective line,
Des.\ Codes Cryptogr.\ { 56}, 89--104 (2010)

\bibitem{LaVa2013}
{Lavrauw M., Van de Voorde G.,}
Scattered linear sets and pseudoreguli, Electron.\ J.\ Combin.\ {  20(1)}, \#P15 (2013)

\bibitem{LaVa2015}
{Lavrauw M., Van de Voorde G.,}
Field reduction and linear sets on a projective line,
Topics in finite fields, 271--293, Contemp.\ Math., 632, Amer.\ Math.\ Soc., Providence, RI, 2015

\bibitem{LaZa2014_2}
{Lavrauw M., Zanella C.,}
Segre embeddings and finite semifields,
Finite Fields Appl.\ {25}, 8--18 (2014)

\bibitem{LaZa2014_3}
{Lavrauw M., Zanella C.,}
Subspaces intersecting each element of a regulus in one point, Andr\'e-Bruck-Bose representation and clubs, 
\url{<http://arxiv.org/abs/1409.1081>}

\bibitem{LaZa2015}
{Lavrauw M., Zanella C.,}
Subgeometries and linear sets on a projective line, Finite Fields Appl.\ {34}, 95--106 (2015)

\bibitem{FiFi}
{Lidl R., Niederreiter H.,} Finite Fields, second edition, Cambridge University Press (1997)

\bibitem{Lu1999}
{Lunardon G.,}
Normal spreads, Geom.\ Dedicata {75}, 245--261 (1999) 

\bibitem{LuMaPoTr2014} 
{Lunardon G., Marino G., Polverino O., Trombetti R.,} Maximum scattered linear sets of pseudoregulus type 
and the Segre Variety ${\cal S}_{n,n}$, J.\ Algebraic Combin.\ {39}, 807--831 (2014)

\bibitem{LuPo2004}
{Lunardon G., Polverino O.,} Translation ovoids of orthogonal polar spaces,
Forum Math.\ {16}, 663--669 (2004)

\bibitem{MaPoTr2007}
{Marino G., Polverino O., Trombetti R.,} 
On $\mathbb{F}_q$-linear sets of $\mathrm{PG}(3, q^3)$ and semifields, 
J.\ Combin. Theory Ser.\ A {114}, 769--788 (2007)

\bibitem{Po2010}
Polverino O., Linear sets in finite projective spaces,
Discrete Math.\ {310}, 3096--3107 (2010)

\bibitem{Pott}
{Pott A.,}
Finite Geometry and Character Theory, Springer-Verlag, Berlin Heidelberg (1995) 


\end{thebibliography}
\end{document}